%
\documentclass[12pt, reqno]{amsart}
\usepackage{amsmath, amsthm, amscd, amsfonts, amssymb, graphicx, color}
\usepackage[bookmarksnumbered, colorlinks, plainpages]{hyperref}

\textheight 22.5truecm \textwidth 14.5truecm
\setlength{\oddsidemargin}{0.35in}\setlength{\evensidemargin}{0.35in}

\setlength{\topmargin}{-.5cm}

\newtheorem{theorem}{Theorem}[section]
\newtheorem{lemma}[theorem]{Lemma}

\newtheorem{corollary}[theorem]{Corollary}
\theoremstyle{definition}
\newtheorem{definition}[theorem]{Definition}

\theoremstyle{remark}
\newtheorem{remark}[theorem]{Remark}
\numberwithin{equation}{section}

\begin{document}
\setcounter{page}{1}

\title[construction of subfactor from planar algebra]{A construction of subfactor by planar structure}

\author[WungHun Ri]{WungHun Ri ,  GwangHo Jong $^1$}

\address{$^{1)}$ Department of Mathematics, Kim Il Sung University, Pyongyang, D. P. R. Korea.}
\email{\textcolor[rgb]{0.00,0.00,0.84}{leewunghun@yahoo.com}}

\subjclass{Primary 46L37; Secondary 47C15.}

\keywords{subfactor, planar algebra, $C^*$-algebra, von Neumann algebra.}

\date{Submtted: 28 Oct 2012 ; Revised: 20 Feb 2013}

\begin{abstract}
We present more planar algebraic construction of subfactors than those of Guionnet-Jones-Shlyakhtenko-Walker and Kodiyalam-Sunder which start from a subfactor planar algebra and give in a direct way a subfactor of the same standard invariant with the planar algebra. Our construction is based on using the ordinary concepts in planar algebras such as involution, inclusion and conditional expectation mappings as it is.
\end{abstract} \maketitle

\section{Introduction}

\noindent This paper was motivated by a joint work of Jones and his colleagues ([6]).
Prier to it, starting from any given subfactor planar algebra $P=(P_{n})_{n=0_{\pm},1,2,\cdots}$, a construction of a subfactor whose standard invariant is precisely the planar algebra was given by the Jones and his colleagues in [1]. It gave a diagrammatic reproof of the remarkable result of Popa in [12]. [6] was proposed as much more simplified approach to the main result in [1]. Their construction is based on giving the structure of Hilbert algebra to their graded vector space $Gr_k(P)=\bigoplus_{n=0}^\infty P_{n+k}$ . Incidentally in [6], explaining the $\ast$-structure on the direct sum componentwise,  they described the $\ast$-operation on $P_{n,k}:=P_{n+k}$  as being just the involution coming from the subfactor planar algebra ([6], definition 3.1).
This explanation seems a little loose; it is not difficult to see that the original (or ordinary) involution in the subfactor planar algebra is not consistent with their pictorial convention in [6] about the elements in $P_{n,k}$ and moreover with their other algebraic structures such as the graded product.
In fact, what they meant was the one given in [1] which is precisely different from the ordinary involution of the planar algebra.
Nevertheless, together with the tangles for Jones projections, inclusions and conditional expectations, the involution operation is one of the basic ingredients which not only determine the subfactor planar algebras, but also are most meaningful in connection with subfactor theory ([4], [5], [7], [11]).
On the other hand, even more, the ordinary concepts such as inclusion and conditional expectation are also meaningless within their construction.
Namely, within $\{P_{n,k}(=P_{n+k})\}$  the germinal ingredients for the involutions, inclusions and conditional expectations in the out coming subfactor are not the same with the ordinary ones for the planar algebras. (for more detailed discussion, see [8] of Kodiyalam and Sunder which gives substantially the same construction with [6])
In this connection we are interested in finding a possibility of another construction in the same spirit as [1], [6] and [8], but by using the ordinary algebraic concepts in the given subfactor planar algebra, especially the involution, inclusion and conditional expectation intact. If one can find such a construction, it would be called more planar than above mentioned.

Unfortunately, however we choose the distinguished interval delicately, any attempt to make such a construction upon their frame could not be succeeded. In other words, based on their irect sum $\bigoplus_{n=0}^\infty P_{n,k}\ (=\bigoplus_{n=0}^\infty P_{n+k}=\bigoplus_{n=k}^\infty P_{n})$, it is impossible to reconcile their graded product with those standard and ordinary concepts of subfactor planar algebra such as involution, inclusion and the others already existed.
Recently we noticed that an alteration of explanation of summands in their direct sum $\bigoplus_{n=0}^\infty P_{n,k}$ gives such a possibility, i.e. a way of constructing of subfactors by using the ordinary algebraic concepts given in the subfactor planar algebra - the involution, inclusions and conditional expectations as it is.
Our approach follows the line of [6] mainly, but needs slight modifications in some details and gives a new construction of a subfactor (more exactly, a tower of subfactors) whose standard invariant is precisely the given subfactor planar algebra as well.
It is seems that there would be no equivalence between our construction(as a model in the sense of [8]) and above mentioned ones ([1], [6], [8]) which are all equivalent.
Moreover our approach gives for any given subfactor planar algebra, an infinite family of towers of subfactors with the same standard invariant, but seemingly of quite different classes.

\section{From planar algebras to Hilbert algebras}
Let us begin with a given subfactor planar algebra $P = (P_{n})_{n=0_{\pm},1,2,\cdots\ }$.
By definition every $P_{n}$ is a finite dimensional $C^*$-algebra.
On the other hand they are also an inner product spaces by a non-degenerate sesquilinear form $(a,b)\mapsto Tr(b^\ast a)$  given by the following diagram(``trace tangle"):
\begin{figure}[ht]
\centering
\includegraphics[width=0.35\textwidth]{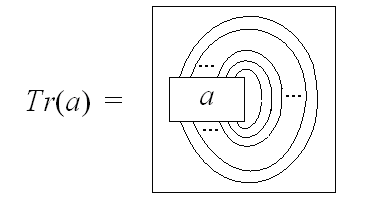}
\end{figure}
\begin{definition} Let $k=0,1,2,\cdots$. On
$$
H_k(P):=P_{k}\oplus P_{k+2}\oplus P_{k+4}\oplus P_{k+6}\oplus \cdots = \bigoplus_{n=0}^\infty 
P_{k+2n}
$$         
as a direct sum of inner product spaces, an involution is given from the $C^*$-structure of $P$ componentwise.
\end{definition}
Moreover, with notation $P_{n,k}:=P_{2n+k}$ we use the expression$H_k(P)=\bigoplus_{n=0}^\infty P_{n,k}$ in a manner analogous to [6].
But it should be emphasized the essential difference, in the meaning of notation $P_{n,k}$, between ours and the one ($P_{n,k}=P_{n+k}$) in [6].
For a while, by $Q_k(\ ,\ )$ denote the inner product in $H_k(P)$.

Let us picture the tangle representing an element $a\in P_{n,k}$ in the following way: 
\begin{figure}[h]
\centering
\includegraphics[width=0.4\textwidth]{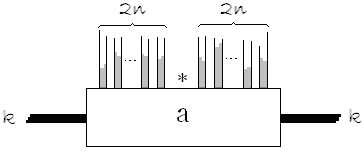}
\caption{}
\end{figure}
\\
Here the distinguished interval is placed in the center of the upper face (the starlike mark) and respectively $k$ strings (tied into thick line) run out  from the both sides of the box. The rest of the strings, i.e., $4n$ strings are stretched up from the upper face and, as we can see, there is shading with white and black alternately in this upper region.
The outer box of the tangle is suppressed as well. It does not lead to any confusion.
As the region touching the distinguished interval is white, it is clear that the both regions touching the upper corners are also white. Therefore the upper part of the tangle is consisted with $2n$ black regions bordered by $4n$ strings. The essential point is that we are mainly concerned with the situation where two adjacent strings with a common black region behave like a couple. In such a situation it is possible to consider each black region (called black band or simply band) like a line. For this reason we redraw the elements in $P_{n,k}$ like the following one which have no difference with [6] in appearance:
\begin{figure}[hb]
\centering
\includegraphics[width=0.25\textwidth]{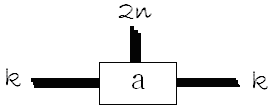}
\caption{}
\end{figure}
\\
But according to our convention the thick line on upper face means a bundle of $2n$ black bands while each one in both sides represents respectively a bundle of $k$ strings.
To indicate the distinguished interval, it is sufficient to stress that the bottom of the box is opposite to the distinguished one.

\begin{remark} 1) On our way of describing the result of involution to a tangle representing an element in $P_{n,k}$ is simply horizontal reflection of the picture with replacing all the elements in the boxes by involution of them respectively.

2) For $a,b\in P_{n,k}$, $Q_k(a,b)=Tr(b^*a)$ is represented in our convention as following:
\begin{figure}[hb]
\centering
\includegraphics[width=0.28\textwidth]{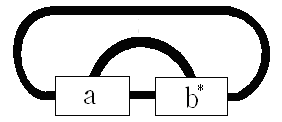}
\caption{}
\end{figure}
\end{remark}
\begin{definition} For $a\in P_{m,k}$ and $b\in P_{n,k}$, their product $a\circ b$ is given by the following diagram:
\begin{figure}[ht]
\centering
\includegraphics[width=0.38\textwidth]{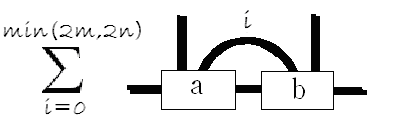}
\end{figure}
\\
It should be noticed that in this figure the number $i$ over the arc means the number of bands, not strings.
Through linear combinations, on $H_k(P)=\bigoplus_{n=0}^\infty P_{n,k}$ a multiplication $(a,b)\mapsto a\circ b$ is introduced.
\end{definition}

\begin{lemma}\label{main}
With the multiplication defined above, $H_k(P)$ is an associative unital $\ast$-algebra.
\end{lemma}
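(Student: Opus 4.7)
The plan is to verify in turn that $\circ$ is a well-defined bilinear operation on $H_k(P)$, that it admits a two-sided identity, that it is associative, and that it is compatible with the componentwise involution of Definition~2.1. All four verifications reduce to manipulations of the tangles encoding $a\circ b$; what is really needed is planar-isotopy bookkeeping of the black bands on top and of the $k$-bundles on either side.

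Well-definedness and bilinearity are essentially automatic: for fixed $a\in P_{m,k}$ and $b\in P_{n,k}$ the picture of Definition~2.3 is a planar tangle with two input boxes, so its partition function is bilinear in $(a,b)$, and its external box has a size determined by $m$, $n$ and the number $i$ of joined bands indicated in the figure, so the output lies in a single summand $P_{r,k}\subset H_k(P)$. Extending by linearity therefore gives a well-defined bilinear map $H_k(P)\times H_k(P)\to H_k(P)$. For the unit I would take $1_{H_k}:=1_{P_k}\in P_{0,k}$, the box with $k$ strings on each side and no black bands on top. Substituting this into either slot of the product tangle and performing planar isotopy collapses the top region and identifies the $k$-strand side bundles of the unit with those of the other input, yielding $1_{H_k}\circ a=a\circ 1_{H_k}=a$ for every homogeneous $a$, hence, by linearity, for all $a$.

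Associativity is the central step and the place where I expect the main obstacle to lie. For $a\in P_{m,k}$, $b\in P_{n,k}$, $c\in P_{p,k}$ one draws the tangles for $(a\circ b)\circ c$ and $a\circ(b\circ c)$ by nesting the product tangle of Definition~2.3 inside itself. In both nestings, three boxes wind up arranged on a common base, with side-bundles of $k$ strings and a fixed pattern of arcs joining black bands at the top; once the nestings are pulled straight, the two pictures coincide as planar tangles up to isotopy, and planar-isotopy invariance of the partition function of the subfactor planar algebra forces $(a\circ b)\circ c=a\circ(b\circ c)$. The only care needed is to track which bands get paired off in each of the two nestings and to check that the band counts agree, since a miscount would destroy associativity; but this is ultimately a routine diagram chase.

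Finally, for the identity $(a\circ b)^*=b^*\circ a^*$, Remark~2.2(1) says that $\ast$ on $P_{n,k}$ acts on tangles by horizontal reflection combined with $\ast$ applied inside each input box. Applying this operation to the product tangle for $a\circ b$ simultaneously swaps the horizontal positions of the two input boxes and replaces their entries by $a^*$ and $b^*$; since horizontal reflection preserves the joining arcs on top and the shading pattern, the resulting tangle is precisely the one defining $b^*\circ a^*$. Linearity then extends the identity to all of $H_k(P)$, which together with the previous three steps completes the proof.
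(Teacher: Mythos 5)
Your proposal is correct and follows essentially the same route as the paper: exhibit the empty box in $P_{0,k}$ as the unit, check compatibility of $\circ$ with the reflection involution pictorially, and treat associativity as diagrammatic bookkeeping (the paper omits this step entirely, deferring to the argument in the reference for the graded product, so your sketch is if anything more explicit). Two small remarks: your identity $(a\circ b)^*=b^*\circ a^*$ is the standard anti-automorphism form required by the Hilbert-algebra axioms, whereas the paper's text writes $a^*\circ b^*$ (evidently a slip, since horizontal reflection swaps the two boxes); and for associativity the honest statement is not that the two nested pictures are isotopic as single tangles but that the two double sums over numbers of joined bands enumerate the same collection of planar diagrams --- the ``band count'' matching you mention is exactly this bijection of index sets.
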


\begin{proof} The unit of $C^\ast$-algebra $P_k$ is represented by the following trivial diagram as an element in $P_{0,k}\subset H_k(P)$:
\begin{figure}[ht]
\centering
\includegraphics[width=0.2\textwidth]{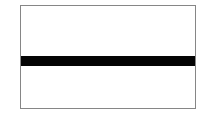}
\end{figure}
\\
It is clear that this tangle gives the unit in $H_k(P)$.

To verify the equality $(a\circ b)^\ast =a^\ast \circ b^*$, it is sufficient to see the below pictorial equality which is clear from 1) in the remark 2.2.
\begin{figure}[ht]
\centering
\includegraphics[width=0.62\textwidth]{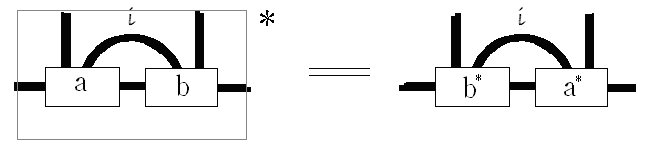}
\end{figure}
\\
We omit the verification of associative rule since it would be analogous to one in [6].
\end{proof}
Due to above consideration, every $H_k(P)$ has both a pre-Hilbert space structure and a $\ast$-algebraic one.

\begin{remark} Let us recall here that a pre-Hilbert space $\mathfrak{A}$ which is at once a $*$-algebra is called a Hilbert algebra if it satisfies the following conditions (1)-(4):

(1) $\langle a,b\rangle=\langle b^*,a^*\rangle,\ a,b\in \mathfrak{A}$

(2) $\langle ab,c\rangle=\langle b,a^*c\rangle,\ a,b,c\in \mathfrak{A}$

(3) For every $a\in \mathfrak{A}$, the left multiplication $\mathfrak{A}\ni a\mapsto ab\in \mathfrak{A}$ gives a bounded operator.

(4) The vector subspace spanned by $\{ab\rvert\ a,b\in \mathfrak{A}\}$ is dense in $\mathfrak{A}$.
\end{remark}
It is well known that Hilbert algebras give von Neumann algebras associated with them. The main purpose of this section is to show that $H_k(P)$ is a Hilbert algebra.
Prier to that let us consider the relation between $H_k(P)$ for various $k$.
\begin{definition} Let $k\leq l$. If we regard the ordinary inclusions in the given planar algebra
$$P_{2n+k}\subset P_{2n+l},\ n=0,1,\cdots,$$
as being $P_{n,k}\subset P_{n,l}$, $n=0,1,\cdots$, i.e., the inclusions between the components for $H_k(P)$ and $H_l(P)$, then the corresponding tangles look like the following diagram(Fig.4):
\begin{figure}[ht]
\centering
\includegraphics[width=0.36\textwidth]{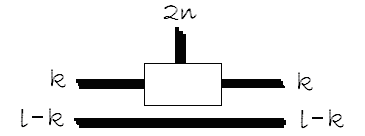}
\caption{}
\end{figure}
\\
Therefore a natural inclusion of (primarily) vector spaces, $I_k^l: H_k(P)\rightarrow H_l(P)$ is defined componentwise.
\end{definition}
For the sake of convenience in notation, from now on we suppose that the inner product in $H_k(P)$ is normalized by $\langle a,b\rangle_k:=\delta ^{-k}Q_k(a,b)$. Here $\delta>0$ denote the modulus of the planar algebra $P$ and will be fixed throughout this paper.
\begin{lemma}
inclusion $\mathcal{I}_k^l: H_k(P)\rightarrow H_l(P)$ is a $*$-algebra isomorphism preserving the inner products.
\end{lemma}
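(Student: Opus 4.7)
The plan is to verify three pictorial facts about $\mathcal{I}_k^l$: (a) multiplicativity, (b) $\ast$-compatibility, (c) preservation of the normalized inner product. Linearity and injectivity are built into the componentwise definition, since each planar-algebra inclusion $P_{2n+k}\hookrightarrow P_{2n+l}$ is injective. So I only have to chase the three structure-preserving identities through the diagrams, exploiting the fact that, in the conventions of Section 2, $\mathcal{I}_k^l$ enlarges the two side bundles from $k$ to $l$ strings while leaving the top bundle of $2n$ bands untouched.

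For (a), I would draw $a\circ b$ for $a\in P_{m,k}$, $b\in P_{n,k}$ as in Definition 2.3 and then draw the analogous picture for $\mathcal{I}_k^l(a)\circ \mathcal{I}_k^l(b)$. In the product tangle the two factors are joined only by $i$ bands across the top; the side bundles of $a$ and of $b$ remain disjoint and pass straight through. Enlarging each side bundle from $k$ to $l$ strings therefore produces exactly the inclusion of the original product, i.e.\ $\mathcal{I}_k^l(a\circ b) = \mathcal{I}_k^l(a)\circ \mathcal{I}_k^l(b)$.

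For (b), Remark 2.2 (1) says the involution is horizontal reflection combined with componentwise $\ast$ in $P_{2n+k}$. The side bundles added by $\mathcal{I}_k^l$ are bilaterally symmetric, so horizontal reflection commutes with the inclusion; moreover the inclusions of the planar algebra are $\ast$-homomorphisms, so they intertwine the componentwise involutions. Composing these two observations gives $\mathcal{I}_k^l(a^\ast)=(\mathcal{I}_k^l a)^\ast$.

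For (c), Remark 2.2 (2) pictures $Q_k(a,b)$ as the trace tangle that closes the top bands of $a$ onto those of $b^\ast$ and joins the two side bundles into external arcs. After passing to $H_l(P)$ via $\mathcal{I}_k^l$, the $l-k$ additional strings on each side of $a$ close directly onto the $l-k$ additional strings of $b^\ast$, producing $l-k$ new free loops, each contributing the modulus factor $\delta$. Hence
\[
Q_l\bigl(\mathcal{I}_k^l a,\mathcal{I}_k^l b\bigr) \;=\; \delta^{\,l-k}\,Q_k(a,b),
\]
and combining with the normalizations $\langle\cdot,\cdot\rangle_k=\delta^{-k}Q_k$ and $\langle\cdot,\cdot\rangle_l=\delta^{-l}Q_l$ yields exactly $\langle \mathcal{I}_k^l a,\mathcal{I}_k^l b\rangle_l=\langle a,b\rangle_k$. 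The one genuinely delicate point—and the main obstacle—is the loop count in this last step; it is precisely this count that dictates, and is absorbed by, the $\delta^{-k}$ normalization introduced just before the lemma.
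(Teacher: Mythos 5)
Your proposal is correct and follows essentially the same route as the paper: the multiplicativity and $\ast$-compatibility are read off the diagrams (the side bundles pass untouched through the product and reflection tangles), and the key point is the loop count $Q_l(\mathcal{I}_k^l a,\mathcal{I}_k^l b)=\delta^{\,l-k}Q_k(a,b)$, which the $\delta^{-k}$ normalization absorbs to give $\langle \mathcal{I}_k^l a,\mathcal{I}_k^l b\rangle_l=\langle a,b\rangle_k$. The paper's proof is just a terser version of the same argument, stating only this inner-product computation explicitly.
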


\begin{proof} All needed are clear by associating Fig.4 with remarks 1.2 and definition 1.3.
For instance, in view of Fig.3 for $a,b\in P_{n,k}$, we can see that $Q_l(\mathcal{I}_k^l(a),\mathcal{I}_k^l(b))$ has $l-k$ extra strings than $Q_k(a,b)$, therefore we obtain the following equality:
$$ \langle \mathcal{I}_k^l(a),\mathcal{I}_k^l(b)\rangle_l=\delta ^{-l}Q_l(\mathcal{I}_k^l(a),\mathcal{I}_k^l(b))=\delta ^{-k}Q_k(a,b)=\langle a,b\rangle_k $$
\end{proof}
Due to the above lemma we can omit the number $k$ from $\langle\ ,\ \rangle_k$, the notation of the inner product in $H_k(P)$.

\begin{theorem} For each $k$, $(H_k(p),\langle\ ,\ \rangle,\circ,\ast)$ is a Hilbert algebra.
\end{theorem}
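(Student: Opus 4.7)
The plan is to verify the four axioms (1)--(4) of Remark 2.5 in turn. Axiom (4) is immediate from Lemma 2.4: since $H_k(P)$ is unital, every element $a$ equals $1\circ a$, so the span of $\{a\circ b\}$ is already all of $H_k(P)$. Axioms (1) and (2) will be dispatched pictorially after reducing via sesqui-/bi-linearity to the homogeneous case, since elements in distinct summands of the direct sum $\bigoplus_{n}P_{n,k}$ are orthogonal and multiply into other homogeneous summands.

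For axiom (1), $\langle a,b\rangle=\langle b^*,a^*\rangle$, one notes that $\langle a,b\rangle=\delta^{-k}\,Tr(b^*a)$ and $\langle b^*,a^*\rangle=\delta^{-k}\,Tr(a\,b^*)$; the closed tangles representing these two scalars (Figure 4 combined with Remark 2.2(1)) differ only by a rotation of the outer closing arcs, i.e.\ by the tracial property of $Tr$, and hence evaluate identically. For axiom (2), $\langle a\circ b,c\rangle=\langle b,a^*\circ c\rangle$, the key move is to isotope the box containing $a$ in the closed tangle representing $\langle a\circ b,c\rangle$ through the closing arcs; the isotopy induces the horizontal reflection which by Remark 2.2(1) replaces $a$ by $a^*$, and one recognises the resulting closed diagram as the one representing $\langle b,a^*\circ c\rangle$.

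The main obstacle is axiom (3): for every $a\in H_k(P)$ the left multiplication $L_a\colon b\mapsto a\circ b$ must extend to a bounded operator on the pre-Hilbert space. Since any $a$ has finite support in $\bigoplus_n P_{n,k}$ and boundedness is preserved under finite sums, it suffices to treat $a\in P_{m,k}$. Following the GJS/Kodiyalam--Sunder strategy, I would construct, by a purely planar tangle applied to $a^*$ and $a$, an element $X_a\in P_{2m+k}$ such that
$$L_a^*L_a \;\leq\; \lVert X_a\rVert_{P_{2m+k}}\cdot \mathrm{id}$$
as operators on $H_k(P)$, where the norm on the right is the $C^*$-norm of the finite-dimensional algebra $P_{2m+k}$. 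This is proved by writing $\langle a\circ b,a\circ b\rangle=\langle L_a^*L_a\,b,\,b\rangle$ as a closed tangle, extracting the two boxes containing $b$ and $b^*$ to the outside, and identifying the remaining tangle as the positive form $\langle X_a\,\hat b,\hat b\rangle$ for an appropriate cabling $\hat b$ of $b$. Once this inequality is in place, $\lVert L_a\rVert\le\lVert X_a\rVert^{1/2}<\infty$ finishes (3). The only non-routine point for us, as opposed to [6] and [8], is to correctly track the band-versus-string count forced by the revised convention $P_{n,k}=P_{2n+k}$ (Definition 2.3 and Figures 2--3) together with the $\delta^{-k}$ normalisation; this bookkeeping is where I expect the entire substantive effort to lie.
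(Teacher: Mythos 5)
Your treatment of axioms (1), (2) and (4) matches the paper's: (4) from unitality, (1) from traciality of the closed trace diagram, and (2) by isotoping the $a$-box through the closing arcs so that it reappears reflected, i.e.\ as $a^*$. (For (2) you should also record, as the paper does, the preliminary reduction to the range $\lvert m-n\rvert\le l\le m+n$, since outside it both sides vanish by orthogonality of the graded components.)

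The gap is in axiom (3). For $a\in P_{m,k}$ and $b\in P_{l,k}$ the product $a\circ b$ is, by Definition 2.3, a \emph{sum} over the cap-number $i=0,\dots,\min(2m,2l)$ of terms lying in the mutually orthogonal components $P_{l+m-i,k}$; hence $\langle a\circ b,a\circ b\rangle$ is a sum of up to $2m+1$ \emph{distinct} closed diagrams, not a single tangle evaluation. The identity you propose, $\langle a\circ b,a\circ b\rangle=\langle X_a\hat b,\hat b\rangle$ for one planar element $X_a\in P_{2m+k}$ and one cabling $\hat b$, therefore cannot hold as a single closed-diagram computation, and the inequality $L_a^*L_a\le\lVert X_a\rVert\,\mathrm{id}$ is left unestablished. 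The paper instead decomposes $L_a=\sum_{i=0}^{2m}L_a^i$ and bounds each piece separately, and this is where the two genuine difficulties sit, neither of which your sketch addresses: (i) after isotopy the $i$-th closed diagram becomes $Tr_{2(l+m-i)+2k'}\bigl(x'a_la_l^*x'^*\bigr)$ with $a_l=\mathcal{I}\mathcal{R}^{m-i}(a)$, i.e.\ it involves the $(m-i)$-fold \emph{rotation} of $a$; rotation is not a $*$-homomorphism, so each piece carries its own constant $\lVert\mathcal{R}^{m-i}(a)\rVert^2$ and no single $C^*$-norm of one fixed element controls all pieces at once; (ii) uniformity of the bound over the degree $l$ of $b$ must be secured by observing that the unital inclusions into $P_{2(l+m-i)+2k'}$ are isometric for the $C^*$-norms. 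Moreover the isotopy that produces a genuine embedding-plus-rotation only works for $i\le m$; for $i>m$ the paper needs a further device (surrounding the diagram by $m$ closed bands, at the cost of a factor $\delta^{2m}$, and replacing $a$ by a rotated image $\tilde a\in P_{2m,k}$) to reduce to the first case, and it first arranges $k$ even by embedding via Lemma 2.7. Your closing remark about band-versus-string bookkeeping gestures at this last point, but the decomposition into the $L_a^i$, the rotation-dependent constants, and the $i>m$ reduction are the substance of the proof and must be supplied.
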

\begin{proof} From remark 1.2, the condition (1) in remark 1.5 is clear and the existence of the unit in $(H_k(p)$ guarantees the condition (4).

Let us verify the condition (2). Let $a\in P_{m,k}$, $b\in P_{n,k}$ and $c\in P_{l,k}$ be given arbitrarily. It is sufficient to prove that the equality $\langle a\circ b,c\rangle=\langle b,a^*\circ c\rangle$, or $Q_k(a\circ b,c)=Q_k(b,a^* \circ c)$ equivalent to it.
Clearly
\begin{align*}
& a\circ b\in P_{r,k}\oplus P_{r+1,k}\oplus P_{r+2,k}\cdots \oplus P_{m+n,k},\\
& \qquad \qquad \qquad \qquad r:=m+n-2 min(m,n)=\lvert m-n \rvert
\end{align*}
from the definition 1.3. Besides, it is easy to see that, in cases of $l<\lvert m-n \rvert$ or $m+n<l$ for  $c\in P_{l,k}$, $b$ is orthogonal to $a^*\circ c$. Indeed if $l<\lvert m-n \rvert$, then $m+l<n$ or $n<m-l$, and if $m+n<l$, then $n<l-m$. Therefore our consideration reduces to the cases where $\lvert m-n \rvert\leq l\leq m+n$ is satisfied.

Since $c\in P_{l,k}$ is orthogonal to $a\circ b$ except the component in $P_{l,k}, Q_k(a\circ b,c)$ can be expressed as follows:
\begin{figure}[ht]
\centering
\includegraphics[width=0.4\textwidth]{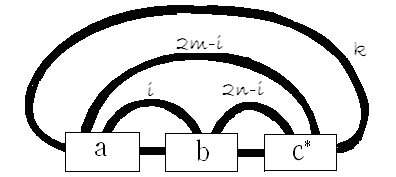}
\end{figure}
\\
This can be altered by planar isotopy into the following diagram.
\\
\begin{figure}[ht]
\centering
\includegraphics[width=0.4\textwidth]{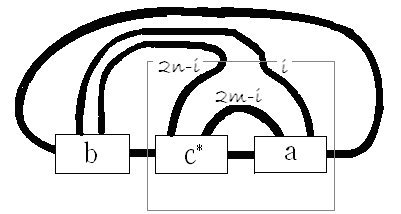}
\end{figure}
\\
After performance of involution to the tangle enclosed by the light line, by putting $j=2m-i$ above picture is transformed to the following one which obviously represents $Q_k(b,a^*\circ c)$:
\begin{figure}[ht]
\centering
\includegraphics[width=0.42\textwidth]{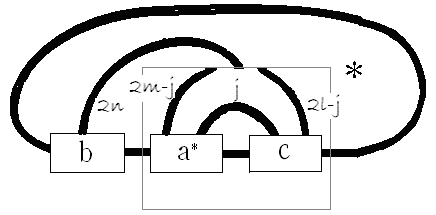}
\end{figure}

Now we turn to the verification of the condition (3), i.e. the proof of boundedness of the operator
$$L_a:\ H_k(P)\ni b\mapsto a\circ b\in H_k(P)$$
for any given $a \in H_k(P)$.

We may assume, without loss of generality, $a \in H_k(P)$ for some $n$. $L_a$ is decomposed into a sum of the following $2n+1$ operators $L_a^i,\ i=0,1,\cdots,2n$.
$L_a^i$ vanishes on $P_{l,k}$ in case $2l<i$, and gives for $x\in P_{l,k}$ in case $2l\geq i$, the element represented by this diagram:
\begin{figure}[ht]
\centering
\includegraphics[width=0.27\textwidth]{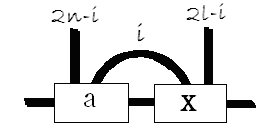}
\end{figure}

Since the boundedness of $L_a$ is equivalent to those of all $L_a^i$, it is sufficient to prove that every $L_a^i$ is bounded for any fixed $i=0,1,\cdots,2n$. What is clear but peculiar is that $L_a^i$ maps every component in $H_k(P)=\bigoplus_{n=0}^\infty P_{n,k}$ into also a component of the direct sum.
This means that for every $l,\ 2l\geq i$, the restriction of $L_a^i$ to $P_{l,k}$ may be denoted by $L_a^i:P_{l,k}\rightarrow P_{l+n-i,k}$.
Therefore it is clear that if all the restrictions $L_a^i:P_{l,k}\rightarrow P_{l+n-i,k}$ are bounded uniformly over all $l$, then $L_a^i$ would be also bounded.

Now let us estimate the norms of $L_a^i:\ P_{l,k}\rightarrow P_{l+n-i,k}$ for every $l,\ 2l\geq i$.
For $x\in P_{l,k}$, $Q_k(L_a^i x,L_a^i x)=Q_k(a\circ x,a\circ x)$ can be seen as the tangle in Fig.5.

By lemma 2.7 even if the consideration is referred to any $H_m(P)$ such as $H_k(P)\subset H_m(P)$, it does no matter with the boundedness of $L_a^i$. Therefore by a suitable embedding if need be, there is no loss of generality in assuming $k=2k'$ for some integer $k'\geq 0$.
Now we regard even the $k=2k'$ stings running from the both sides in Fig.5 as $k'$ black bands similarly to those from upper faces of the boxes, so that all the thick lines mean the bundles of black bands.
\begin{figure}[ht]
\centering
\includegraphics[width=0.45\textwidth]{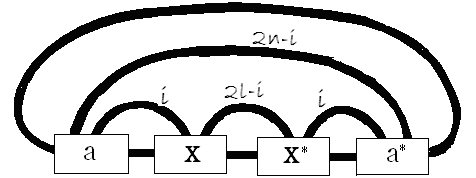}
\caption{}
\end{figure}
\\
As a result $k'$ bands touch both sides of every boxes in the figure respectively, so the number of bands for boxes labeled by $a$ and $x$ is $2(n+k')$ and $2(l+k')$ respectively.

Let us see first the case $i\leq n$.
Under above convention we perform spherical isotopy to Fig.5 transforming it to the left one in Fig.6 where the same number of bands($l+n-i+k'$) touch the bottom and top of each of the $4$ imaginary sections divided by the light horizontal lines.
\begin{figure}[ht]
\centering
\includegraphics[width=0.57\textwidth]{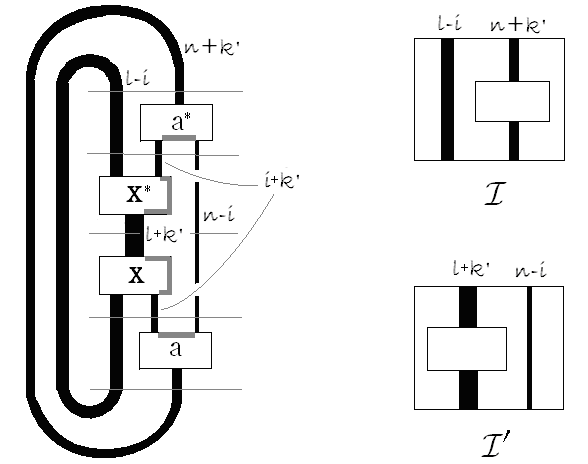}
\caption{}
\end{figure}
\\
What should be careful is that about the positions of distinguished intervals. For each box the thick portion in the border indicates the opposite interval to the distinguished one. In the right in Fig.6, two tangles which describe the ordinary inclusions into $P_{2m_l+2k'}$, $m_l:=l+n-i$, in the standard way([5]). In other words, for each of the boxes in $\mathcal{I}$ and $\mathcal{I}'$ the left side indicates the distinguished interval. Note that the numbers on both tangles denote the number of bands, not strings.

Putting $a_l:=\mathcal{IR}^{n-i}(a),\ x':=\mathcal{I}'(x)$ by using the two embeddings and a suitable rotation([5])  $\mathcal{R}$, the left tangle in the figure means $Tr_{2m_l+2k'}(a_l^*x'^*x'a_l)$ involving the ordinary multiplications in the planar algebra $P$. Again, in view of identification $P_{2m_l+2k'}=P_{m_l,k}$, using positivity of the traces and $C^*$-property we obtain the following estimation:
\begin{align*}
&\delta^k\langle a\circ x,a\circ x \rangle=Q_k(a\circ x,a\circ x)\nonumber\\
&=Tr_{2 m_l+2 k'}(a_l^*x'^*x'a_l)=Tr_{2 m_l+2 k'}(x'a_l a_l^*x'^*)\\
&\leq \lVert a_l a_l^* \rVert Tr_{2 m_l+2 k'}(x'^*x')=\lVert a_l \rVert ^2 Tr_{2 m_l+2 k'}(x'^*x')\\
&=\lVert a_l \rVert ^2 Q_k(x'^*x')=\lVert a_l \rVert^2\delta^k\langle x'^*,x'\rangle
\end{align*}
Here $\lVert \cdot \rVert$ denotes the $C^*$-norm of $P_{2m_l+2k'}$.
By uniqueness of $C^*$-norm or, as equivalently, conservation of norm under $\ast $-isomorphism for $C^*$-algebras, for each $l$, $\mathcal{I}:P_{2n+k}\rightarrow P_{2m_l+k}$ is an isometry. Therefore $\lVert a_l \rVert=\lVert \mathcal{R}^{n-i}(a)\rVert$ and after all $\lVert a_l \rVert$ are independent of $l$.
On other hand, since $\mathcal{I}'$ is the embedding in lemma 2.7, and from above inequality we obtain the following estimations for all $l$:
$$
\langle L_a^i x,L_a^i x\rangle\leq C_a^i\langle x,x \rangle,\ x \in P_{l,k}\ \ (\ C_a^i:=\lVert \mathcal{R}^{n-i}(a) \rVert \ )
$$     
In case $i\leq n$, it turned out that for all $l$, $L_a^i:\ P_{l,k}\rightarrow P_{l+n-i,k}$ are uniformly bounded.

Next let us examine the case $i>n$. If we add $n$ closed bands around the tangle in Fig.5, then the following one equivalent to $\delta^{2n}Q_k(L_a^i x,L_a^i x)$, is obtained:
\begin{figure}[ht]
\centering
\includegraphics[width=0.5\textwidth]{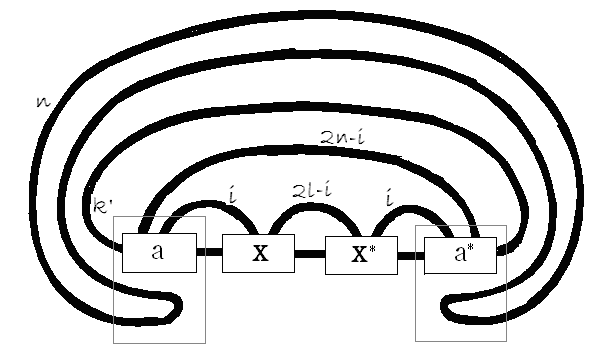}
\end{figure}
\\
Now consider $a \in P_{n,k}(=P_{2(n+k')})$ to be embedded in $P_{2(2n+k')}$ through the standard $ \ast $-inclusion $P_{2(n+k')} \hookrightarrow P_{2(2n+k')}$ described by the left one in the following figure. $\tilde a $ denotes the result of applying to $a \in P_{2(2n+k')}$ the relevant rotation by $n$ times. Then reconsideration of $P_{2(2n+k')}$ as $P_{2n,k}=P_{2n,2k'}$ gives a simplification of above figure to the right one in the following:
\begin{figure}[ht]
\centering
\includegraphics[width=0.6\textwidth]{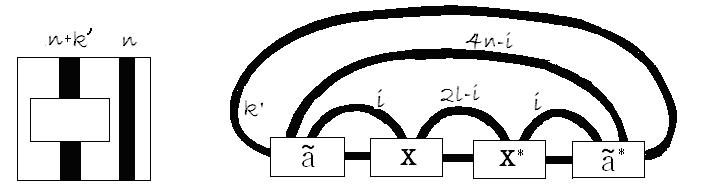}
\end{figure}
\\
After all $ \delta^{2n}Q_k(L_a^i x,L_a^i x)=Q_k(L_ {\tilde a}^i x,L_{\tilde a}^i x)$ and since $i<2n$ for $\tilde a \in P_{2n,k}$, the required norm estimation returns to the already considered case.
\end{proof}

\begin{remark} 1) For every $k=0,1,2,\cdots$, denote the left von Neumann algebra of $H_(P)$ as a Hilbert algebra by $M_k$. Therefore for $x,y \in M_k$, the product $xy$ in $M_k$ coincides with $x\circ y$ in case $x,y \in H_k(P)$.

2) Since $H_k(P)$ is unital, $M_k$ is a finite von Neumann algebra. Moreover the canonical trace on $M_k$ is given by this:
$$ tr:M_k \rightarrow \mathbb{C},\ tr(x)=\langle xI,I \rangle $$  
Here $I$ denotes the unit of $H_k(P)$ and $xI$ means the action of $x$(as an operator) to $I$. Notice that especially in case $x \in P_{n,k}(\subset H_k(P))$, except only the case where $n=0$, we always have $tr(x)=0$.

3) As usual we denote the completion of $H_k(p)$ by $L^2(M_k,tr)$ or $L^2(M_k)$.
\end{remark}
Of course, by the embeddings in lemma 2.7 we have the inclusion tower of von Neumann algebras
$M_0 \subset M_1 \subset M_2 \subset M_3 \subset \cdots$.

\section{the von Neumann algebras constructed from the planar algebras}
Here we prove that, for every $k=0,1,2,\cdots$, the von Neumann algebra $M_k$ is a factor of type $II_1$.

Let us begin with any fixed $k$. As to tangles, unless we explicitly say otherwise, the lines running upward will denote the black bands as before.
Let us consider two elements given by the following diagrams (For $W$, a detailed explanation by spreading the lines to black regions is also given):
\begin{figure}[ht]
\centering
\includegraphics[width=0.67\textwidth]{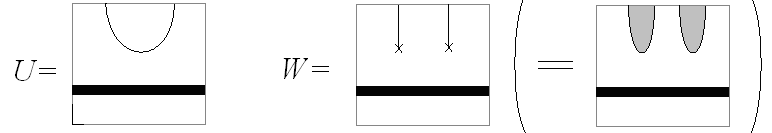}
\end{figure}
\begin{remark} 1) Let $\mathfrak{A}$ denote the subalgebra generated by $U$ in $H_k(P)$. Then multiplication by $U$ gives a $\mathfrak{A}-\mathfrak{A}$ bimodule structure and closeness of a subspace in $H_k(P)$ under multiplication by $U$ to both sides is equivalent to its $\mathfrak{A}-\mathfrak{A}$ bimodule property. Moreover, by $A$ denote the von Neumann subalgebra in $M_k$ generated by $U$, i.e. the closure of $\mathfrak{A}$ relative to the weak topology in $M_k$. Then for a closed subspace in $L^2(M_k)$, its closedness even under multiplication by $U$ to both sides is equivalent to its $A-A$ bimodule property.

2) For $x \in P_{n,k}$, $x_{p,q}$ denotes the element in $P_{n+p+q,k}$ given by the following tangle. Here $p,q$ are the numbers of ``cups":
\begin{figure}[hb]
\centering
\includegraphics[width=0.43\textwidth]{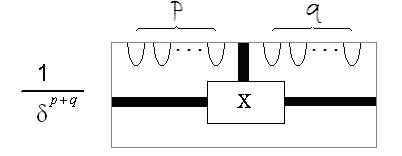}
\end{figure}
\\
In case $x \in P_{0,k}$, being dependent on $p+q=r$ only, $x_{p,q}$ can be also denoted by $x_r$.
Put $V_0=P_{0,k}$ and then for $n=1,2,\cdots$, by $V_n$ denote the orthogonal complement of $\{x_{1,0},x_{0,1}\lvert\ x \in P_{n-1,k}\}$ in $P_{n,k}$.
\end{remark}
\begin{lemma}
In case $n\geq 1$, the condition $y\in V_n$ for $y\in P_{n,k}$ is equivalent to the following pictorial equality:
\begin{figure}[h]
\centering
\includegraphics[width=0.45\textwidth]{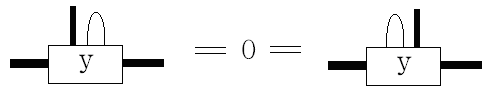}
\end{figure}
\end{lemma}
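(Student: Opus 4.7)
The plan is to translate the defining orthogonality condition for $V_n$ into a capping statement about $y$, by using non-degeneracy of the trace form on $P_{n-1,k}$.

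First I would unpack what $y\in V_n$ literally asserts: $\langle y,x_{1,0}\rangle=0$ and $\langle y,x_{0,1}\rangle=0$ for every $x\in P_{n-1,k}$. By Remark 2.2(2), each such inner product is represented by a closed tangle obtained by stacking the horizontal reflection of the picture for $x_{1,0}$ (resp.\ $x_{0,1}$) above the picture for $y$ and closing it off with the trace tangle. The cup that is built into $x_{1,0}$ (resp.\ $x_{0,1}$) can then be pulled across the composite diagram by planar isotopy, and what remains is precisely the trace, in $P_{n-1,k}$, of $x^{*}$ multiplied by a fixed element $c_{L}(y)$ (resp.\ $c_{R}(y)$) of $P_{n-1,k}$. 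Here $c_{L}(y)$ is the tangle obtained from $y$ by capping off the leftmost pair of adjacent upper bands, and $c_{R}(y)$ is its right-sided counterpart.

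Next I would invoke non-degeneracy of the trace on the finite-dimensional $C^{*}$-algebra $P_{n-1,k}$: for a fixed $c\in P_{n-1,k}$, the linear functional $x\mapsto Tr(x^{*}c)$ vanishes identically on $P_{n-1,k}$ iff $c=0$. Applying this to $c=c_{L}(y)$ and $c=c_{R}(y)$ shows that $y\in V_n$ is equivalent to $c_{L}(y)=0$ and $c_{R}(y)=0$ simultaneously. Drawing those two equalities diagrammatically reproduces the pictorial equality asserted in the lemma.

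The delicate step is the planar isotopy in the second paragraph. One has to follow the shading, the location of the distinguished interval, and the band-versus-string convention carefully enough to confirm that, once the cup is pushed across, what remains is genuinely an inner-product pairing on $P_{n-1,k}$ and that the cap lands on a genuinely adjacent pair of upper bands of $y$ rather than on two bands that were only artificially joined by the deformation. I expect that to be the only real obstacle; once the bookkeeping is executed correctly, the rest is a routine appeal to non-degeneracy of the trace.
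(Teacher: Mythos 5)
Your proposal is correct and follows essentially the same route as the paper: rewrite $\langle y,x_{0,1}\rangle$ (resp.\ $\langle y,x_{1,0}\rangle$) by planar isotopy as $\langle y',x\rangle$ for the capped element $y'\in P_{n-1,k}$, then conclude by non-degeneracy of the trace form on $P_{n-1,k}$. The ``delicate step'' you flag (tracking shading, the distinguished interval, and which pair of bands receives the cap) is exactly what the paper's figures are doing implicitly, so nothing further is needed.
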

\begin{proof} By the definition of the inner product, for any $x\in P_{n-1,k}$, $\langle y,x_{0,1}\rangle$ is equal to the following:
\begin{figure}[ht]
\centering
\includegraphics[width=0.25\textwidth]{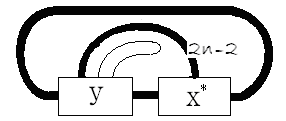}
\end{figure}
\\
Recall the definition of the inner product again. Then, denoting the element in $P_{n-1,k}$ in the left equality in the statement by $y'$, it is clear that the equality means the following equality:
$$\langle y,x_{0,1}\rangle=\langle y',x\rangle$$
By nondegeneracy of the inner product, $\langle y',x\rangle=0$ for all $x\in P_{n-1,k}$ implies $y'=0$. Analogous argument can be applied for the right equality.                   
\end{proof}
\begin{corollary} For any $x,y\in V_0$, we have $\langle x_r,y_r'\rangle=\delta_{r,r'}\langle x,y\rangle$;
In case $m+n\geq 1$, we have $\langle u_{p,q},v_{p',q'}\rangle=\delta_{p,p'}\delta_{q,q'}\langle u,v\rangle$ for any $u\in V_m$, $v\in V_n$. Here $\delta$ denotes Kronecker's delta.
\end{corollary}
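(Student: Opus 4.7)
My plan is a case analysis on the total degrees and cup counts. First, because $u_{p,q} \in P_{m+p+q,k}$ and $v_{p',q'} \in P_{n+p'+q',k}$ are, by construction, elements of different direct summands of the orthogonal decomposition $H_k(P) = \bigoplus_n P_{n,k}$ whenever $m+p+q \neq n+p'+q'$, the inner product automatically vanishes in that case. This disposes of $r \neq r'$ in the first assertion and the dimension-mismatch subcase in the second, so I may henceforth assume $m+p+q = n+p'+q'$.

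With that assumption, I would draw the tangle for $Q_k(u_{p,q}, v_{p',q'})$ by composing the pictorial recipes of Remarks 2.2(2) and 3.1(2): stack $(v_{p',q'})^*$ opposite $u_{p,q}$ (horizontally reflecting $v_{p',q'}$ with involution applied to the box) and close off all outer strings. If $(p,q) = (p',q')$, which forces $m = n$, then planar isotopy pairs the cups of $u_{p,q}$ with those of $(v_{p,q})^*$ into closed loops; the resulting $\delta$-factors should cancel cleanly against the normalization $\delta^{-k}$ via the same accounting used in Lemma 2.7 (where $l-k$ added strings contribute a factor of $\delta^{l-k}$), leaving precisely the tangle for $Q_k(u,v)$. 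If instead $(p,q) \neq (p',q')$, without loss of generality $p > p'$, and after planar isotopy the excess cups on the left side of $u_{p,q}$ contract with a portion of a band on the top of $u$ or $v$, producing precisely a cap of the shape appearing in one of the pictorial equalities of Lemma 3.2. Since $m + n \geq 1$ forces the capped element to lie in some $V_j$ with $j \geq 1$, Lemma 3.2 kills the tangle and $\langle u_{p,q}, v_{p',q'}\rangle = 0$.

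The first assertion is then an immediate consequence: for $x, y \in V_0 = P_{0,k}$, the observation from Remark 3.1(2) that $x_{p,q}$ depends only on $p+q = r$ makes $x_r$ well-defined, and the matching-cup computation above (which does not invoke Lemma 3.2 and so applies even when $m = n = 0$) yields $\langle x_r, y_r\rangle = \langle x, y\rangle$.

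I expect the main technical difficulty to be the planar-isotopy step in the unequal-cup sub-case: one must carefully track which excess cups contract against which bands on top of $u$ or $v$ and verify that the resulting local configuration matches one of the two equalities of Lemma 3.2 (depending on whether the excess is on the left or right, and on whether it lands on $u$ or $v$). The matching-cup $\delta$-factor bookkeeping should be routine following the template of Lemma 2.7.
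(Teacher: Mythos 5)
Your case division and mechanism coincide with the paper's: orthogonality of distinct summands of $\bigoplus_n P_{n,k}$ kills mismatched total degrees; unequal cup counts force a cap onto one of the boxes, which Lemma 3.2 annihilates; equal cup counts produce $p+q$ closed bands whose $\delta$-factors are absorbed, leaving $\langle u,v\rangle$. So the route is the same. But one step of your bookkeeping, taken literally, proves a different (false) identity. You attribute the cancellation of the closed-loop factors to the normalization $\delta^{-k}$ of Lemma 2.7. That factor accounts for the $k$ side strings, which appear identically in $Q_k(u_{p,q},v_{p,q})$ and in $Q_k(u,v)$, so it drops out of the comparison and can absorb nothing. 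Each matched pair of cups closes into a black band bounded by two circles and contributes $\delta^2$, so with no further normalization you would obtain $\langle u_{p,q},v_{p,q}\rangle=\delta^{2(p+q)}\langle u,v\rangle$. The compensating factor is the $\delta^{-1}$ attached to each cup in the definition of $x_{p,q}$ --- the paper only makes this explicit in the proof of Lemma 3.7, where $x_{p,q}=\omega_p x\,\omega_q$ and $\omega_p$ is ``add $p$ cups and multiply by $\delta^{-p}$.'' With that convention the two boxes contribute $\delta^{-2(p+q)}$ and the cancellation is exact; you must invoke this, not Lemma 2.7.

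A smaller point: ``$m+n\geq 1$ forces the capped element to lie in some $V_j$ with $j\geq 1$'' is not quite the right inference, since $m+n\geq 1$ only guarantees that \emph{one} of the two boxes has positive index, not that the capped one does. The argument is saved by two observations you should make explicit: a box with no top bands cannot receive a cap, so whichever box is capped automatically has index $\geq 1$ and, lying in $V_m$ or $V_n$ by hypothesis, is killed by Lemma 3.2; and when $(p,q)\neq(p',q')$ the nested closure of the trace tangle cannot match every cup to a cup, so some cap does land on a box, and it lands on an outermost adjacent pair of its bands, which is exactly the configuration of Lemma 3.2. With these two repairs your argument is the paper's proof.
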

\begin{proof} It is easy to see that the first statement is true. We consider the second one only.

Suppose either $p\neq p'$ or $q\neq q'$. Then it is sufficient to study only the case in which $u_{p,q}$ and $v_{p',q'}$ belong to the same $P_{l,k}$. For $\langle u_{p,q},v_{p',q'}\rangle$, picturing again the tangle gives at least one of $u$ and $v$``capping" like the previous lemma, so it vanishes.

Also, when $p=p'$ and $q=q'$ at the same time, it is sufficient to see only the case in which $u$ and $v$ belong to the same $P_{l,k}$. But now, let us regard that the $p+q$ closed bands arising in the tangle for $\langle u_{p,q},v_{p',q'}\rangle$ can be converted into $\delta^{2(p+q)}$.
Then it is clear that the inner product is conserved.
\end{proof}

Now, for $B \subset H_k(P)$, by $\mathfrak{A} B \mathfrak{A}$ denote the submodule generated by $B$.
\begin{lemma} If $n\geq 1$, then for any $u\in V_n,\ \langle u,u\rangle=1$, the set
$$\{u_{p,q}\lvert\ p,q=0,1,2,\cdots \}$$
forms an orthonormal basis for $\mathfrak{A} u \mathfrak{A}$. Therefore we can identify the completion of $\mathfrak{A} u \mathfrak{A}$ with $l^2(\mathbb{N})\otimes l^2(\mathbb{N})$ by the obvious unitary equivalence.

Moreover under this identification the left and right multiplication operators by $U$ are represented respectively as follows:
$$ (\delta (S+S^*)+id)\otimes id,\ \ id \otimes (\delta (S+S^*)+id) $$
Here $S$ denotes unilateral shift on $l^2(\mathbb{N})$, i.e.
$$ S(a_1,a_2,a_3,\cdots)=(0,a_1,a_2,\cdots),\ (a_1,a_2,a_3,\cdots) \in l^2(\mathbb{N}). $$
\end{lemma}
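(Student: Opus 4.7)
The plan is to first deduce orthonormality of $\{u_{p,q}\}$ from Corollary 3.3, and then to compute the left and right multiplications by $U$ explicitly; these computations simultaneously identify the multiplication operators with the claimed form and show that the $u_{p,q}$ span $\mathfrak{A}u\mathfrak{A}$.

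Orthonormality is immediate: Corollary 3.3 applied with $m=n$, $u=v$, together with the hypothesis $\langle u,u\rangle=1$, yields $\langle u_{p,q},u_{p',q'}\rangle=\delta_{p,p'}\delta_{q,q'}$. The unitary identification with $\ell^2(\mathbb{N})\otimes\ell^2(\mathbb{N})$ is then simply $u_{p,q}\mapsto e_p\otimes e_q$.

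The core of the argument is to establish
\[
U\circ u_{p,q} \;=\; \delta\,u_{p+1,q} \;+\; u_{p,q} \;+\; \delta\,u_{p-1,q}\cdot\mathbf{1}_{p\geq 1},
\]
and the symmetric identity for $u_{p,q}\circ U$. These immediately give the matrix form $(\delta(S+S^*)+id)\otimes id$ and $id\otimes(\delta(S+S^*)+id)$ on the identified basis, and by induction on $i+j$ they imply $U^{i}\circ u\circ U^{j}\in\mathrm{span}\{u_{p',q'}\}$, so the span of $\{u_{p,q}\}$ is exactly $\mathfrak{A}u\mathfrak{A}$. To prove the identity, I would use Definition 2.3 to express $U\circ u_{p,q}$ as a sum indexed by $i\in\{0,1\}$, since $U\in P_{1,k}$. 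Reading off the diagram for $U$, the $i=0$ contribution becomes (after planar isotopy) an extra left cup placed on top of $u$, yielding $\delta\,u_{p+1,q}$. The $i=1$ contribution splits according to which band is contracted: contracting against one of the existing left cups of $u_{p,q}$ produces $\delta\,u_{p-1,q}$ (the factor $\delta$ arising from a closed loop formed in the simplification), while the remaining placement reproduces $u_{p,q}$ itself. The computation for $u_{p,q}\circ U$ is entirely symmetric in the two sides of the box.

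The main obstacle is the pictorial bookkeeping in this last step. Specifically, the computation must rule out any ``cap placed directly against $u$'' contribution, i.e.\ any term in which the contraction turns $u$ into an element of the form $x_{1,0}$ (or $x_{0,1}$) for some $x\in P_{n-1,k}$. This is precisely where the hypothesis $u\in V_n$ is used: by Lemma 3.2, the pictorial operation of capping $u$ on either side gives zero, which is exactly the constraint that forces only the three listed terms to survive. The remaining care is in tracking the factor of $\delta$ produced by each closed loop formed by the planar isotopies, and in verifying that the boundary case $p=0$ correctly drops the $\delta\,u_{p-1,q}$ term, matching the action of $S^{*}$ on $e_0$.
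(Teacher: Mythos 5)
Your proposal is correct and follows essentially the same route as the paper: orthonormality from Corollary 3.3, the tridiagonal expansion formula $U\circ u_{p,q}=\delta u_{p+1,q}+u_{p,q}+\delta u_{p-1,q}$ (with the last term absent for $p=0$) proved diagrammatically with Lemma 3.2 killing the capped-$u$ terms, induction to identify the span with $\mathfrak{A}u\mathfrak{A}$, and the unitary $u_{p,q}\mapsto\theta_p\otimes\theta_q$ to read off the operators. One small bookkeeping point: since $U\in P_{1,k}$ carries two bands, the product decomposes over $i\in\{0,1,2\}$ rather than $i\in\{0,1\}$ (the term $\delta u_{p-1,q}$ comes from the $i=2$ contraction, not from a second $i=1$ placement), but this does not affect the resulting formula or the rest of the argument.
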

\begin{proof} The orthonormality of $\{u_{p,q}\lvert\ p,q=0,1,2,\cdots \}$ is an immediate consequence of the corollary 3.3.

From an easy computation, the following expansion formula is obtained:
$$ U(u_{p,q})=\left\{ \begin{array}{ll}
\delta u_{1,q}+u_{0,q}\ , &p=0\\
\delta u_{p+1,q}+u_{p,q}+ \delta u_{p-1,q}\ , &p\geq 1
\end{array}\right. $$
Here $U(u)$ denotes the action of $U$ to $u$ as a left multiplication operator. It is not difficult to see that an analogous expansion formula for the right multiplication by $U$ can be obtained symmetrically.

From above formula by putting $u_{0,0}=u$ and using an inductive argument, we can see that $u_{p,q} \in \mathfrak{A} u \mathfrak{A}$ for all $p,q=0,1,2,\cdots$. Therefore
$$Span\{u_{p,q}\lvert\ p,q=0,1,2,\cdots \} \subset \mathfrak{A} u \mathfrak{A}$$
and since clearly $Span\{u_{p,q}\lvert\ p,q=0,1,2,\cdots \}$ is closed under multiplications by $U$ to both sides, the first statement is proved.

Now put $\theta_n=(\overbrace{0,\cdots,0,1}^n,0,\cdots)$. Then $\{\theta_n\lvert\ n=0,1,2,\cdots \}$ gives an orthonormal basis for $l^2(\mathbb{N})$.
It is clear that the correspondence $u_{p,q}\mapsto \theta_p\otimes \theta_q$
gives a unitary transformation from the completion of $\mathfrak{A} u \mathfrak{A}=Span\{u_{p,q}\lvert\ p,q=0,1,2,\cdots \}$ onto $l^2(\mathbb{N})\otimes l^2(\mathbb{N})$.

Let us convert above formula on $U(u_{p,q})$ coordinately. For simplicity, by the same letter $U$ denote the representation of the operator $U$ onto $l^2(\mathbb{N})\otimes l^2(\mathbb{N})$. Then the left action of $U$ for $(a_{p,q})_{p,q=0,1,\cdots} \in l^2(\mathbb{N})\otimes l^2(\mathbb{N})$ is calculated as follows:
\begin{align*}
&U\left(\sum_{p,q=0}^\infty a_{p,q}\theta_p\otimes \theta_q\right)=U\left(\sum_{p,q=0}^\infty a_{p,q}u_{p,q}\right)\\
&=\sum_{p,q=0}^\infty \big( a_{0,q}(\delta u_{1,q}+u_{0,q})+a_{1,q}(\delta u_{2,q}+u_{1,q}+\delta u_{0,q})+ \cdots\\
& \qquad \qquad \qquad \qquad \cdots+ a_{n,q}(\delta u_{n+1,q}+u_{n,q}+\delta u_{n-1,q})+ \cdots \big)\\
&=\sum_{q=0}^\infty \sum_{p=0}^\infty \big(\delta (a_{p+1,q}+a_{p-1,q})+a_{p,q}\big) u_{p,q}\\
&=\sum_{q=0}^\infty \left( \sum_{p=0}^\infty \big(\delta (a_{p+1,q}+a_{p-1,q})+a_{p,q}\big)\theta_p\right)\otimes  \theta_q\\
&=\left[ \big(\delta (S+S^*)+id\big)\otimes id \right] \left(\sum_{p,q=0}^\infty a_{p,q}\theta_p \otimes \theta_q \right)
\end{align*} 
In the calculation, $a_{-1,q}=0$ was supposed.

For the right multiplication by $U$, an analogous calculation can be used.
\end{proof}
By corollary 2.3, for different $n\geq 1$, $\mathfrak{A} V_n \mathfrak{A}$ are orthogonal to each other. Therefore, as a $\mathfrak{A} - \mathfrak{A}$ subbimodule of $H_k(P)$, the orthogonal direct sum $\bigoplus_{n\geq 1} \mathfrak{A} V_n \mathfrak{A}$ makes sense and its closure is clearly a $A-A$ bimodule, which will be denoted by $\mathfrak{R}$.
\begin{corollary}  As a Hilbert space, $\mathfrak{R}$ is unitarily isomorphic to
$$ l^2(\mathbb{N})\otimes l^2(\mathbb{N})\otimes l^2(\mathbb{N})$$
and, under this isometry, the left and right multiplication operators by $U$ are represented respectively as follows:
$$ id\otimes (\delta (S+S^*)+id)\otimes id,\ \ id \otimes id \otimes (\delta (S+S^*)+id) $$
\end{corollary}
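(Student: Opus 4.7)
The plan is to reduce everything to Lemma 3.4 by choosing an orthonormal basis of each $V_n$. Since every $P_{n,k}=P_{2n+k}$ is finite-dimensional, so is $V_n$; fix an orthonormal basis $\{u_1^{(n)},\ldots,u_{d_n}^{(n)}\}$ of $V_n$ for each $n\geq 1$ and enumerate the countable union $\bigcup_{n\geq 1}\{u_1^{(n)},\ldots,u_{d_n}^{(n)}\}$ as $\{e_j\}_{j=1}^{\infty}$, thought of as labelled by the standard basis vectors $\theta_j$ of $l^2(\mathbb{N})$.

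For each $j$, Lemma 3.4 provides a unitary $\Phi_j:\overline{\mathfrak{A}e_j\mathfrak{A}}\to l^2(\mathbb{N})\otimes l^2(\mathbb{N})$ sending $(e_j)_{p,q}\mapsto \theta_p\otimes\theta_q$, under which left and right multiplication by $U$ become $(\delta(S+S^*)+id)\otimes id$ and $id\otimes(\delta(S+S^*)+id)$ respectively. Corollary 3.3, applied to each pair $(e_j,e_{j'})$, gives
$$\langle (e_j)_{p,q},(e_{j'})_{p',q'}\rangle=\delta_{p,p'}\delta_{q,q'}\langle e_j,e_{j'}\rangle,$$
so the subspaces $\overline{\mathfrak{A}e_j\mathfrak{A}}$ are pairwise orthogonal and internally orthonormal on the declared basis. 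Since the pictorial operation $x\mapsto x_{p,q}$ is linear in $x$, we also have $\mathfrak{A}V_n\mathfrak{A}=\sum_i\mathfrak{A}u_i^{(n)}\mathfrak{A}$, hence $\bigoplus_{n\geq 1}\mathfrak{A}V_n\mathfrak{A}=\bigoplus_j\mathfrak{A}e_j\mathfrak{A}$ as algebraic direct sums. Taking closures and assembling the $\Phi_j$ yields
$$\mathfrak{R}=\overline{\bigoplus_{j}\mathfrak{A}e_j\mathfrak{A}}\;\cong\;\bigoplus_{j}\bigl(l^2(\mathbb{N})\otimes l^2(\mathbb{N})\bigr)\;\cong\;l^2(\mathbb{N})\otimes l^2(\mathbb{N})\otimes l^2(\mathbb{N}),$$
with explicit identification $(e_j)_{p,q}\leftrightarrow \theta_j\otimes\theta_p\otimes\theta_q$.

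Under this unitary the left and right actions of $U$ preserve each summand $\overline{\mathfrak{A}e_j\mathfrak{A}}$ (since it is a sub-$\mathfrak{A}$-bimodule) and act on it as prescribed by Lemma 3.4. Taking the orthogonal sum over $j$ simply tensors these actions by $id$ on the new first factor, giving $id\otimes(\delta(S+S^*)+id)\otimes id$ and $id\otimes id\otimes(\delta(S+S^*)+id)$ as required.

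The argument is mostly bookkeeping once Lemma 3.4 and Corollary 3.3 are available; no serious obstacle arises. The only points meriting a moment's care are (i) the linearity of $x\mapsto x_{p,q}$, which is immediate from its definition by a fixed tangle with $x$ inserted, so that expanding $x\in V_n$ in the chosen basis produces exactly the same expansion of $x_{p,q}$ in the $(u_i^{(n)})_{p,q}$; and (ii) ensuring that the countable direct sum $\bigoplus_{j}(l^2\otimes l^2)$ is unitarily $l^2\otimes l^2\otimes l^2$, which is the standard identification via the orthonormal basis $\{\theta_j\otimes\theta_p\otimes\theta_q\}$.
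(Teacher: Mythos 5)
Your proposal is correct and follows essentially the same route as the paper: pick orthonormal bases of the finite-dimensional spaces $V_n$, use Corollary 3.3 to see that the bimodules $\mathfrak{A}e_j\mathfrak{A}$ are mutually orthogonal, identify each with $l^2(\mathbb{N})\otimes l^2(\mathbb{N})$ via Lemma 3.4, and assemble the countable orthogonal sum into $l^2(\mathbb{N})\otimes l^2(\mathbb{N})\otimes l^2(\mathbb{N})$. Your version merely spells out more explicitly the bookkeeping (linearity of $x\mapsto x_{p,q}$ and the compatibility of the $U$-actions with the direct sum) that the paper leaves implicit.
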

\begin{proof} Since $V_n$ are all finite dimensional, taking orthonormal bases from each of them and arranging them in order gives an orthonormal basis $\{u_m\}_{m\in \mathbb{N}}$ for
$$ V_1\oplus V_2\oplus \cdots \oplus V_n\oplus \cdots $$
By corollary 3.3, for different $m$, $\mathfrak{A} u_m \mathfrak{A}$ are orthogonal to each other, so that one obtains again
$$\bigoplus_{n\geq 1} \mathfrak{A} V_n \mathfrak{A}=\bigoplus_{m\geq 1} \mathfrak{A} u_m \mathfrak{A}$$
the decomposition into orthogonal direct sum.

Furthermore since $\mathfrak{A} u_m \mathfrak{A}=l^2(\mathbb{N})\otimes l^2(\mathbb{N})$ is known by the previous lemma, we conclude:
$$ \mathfrak{R}\cong \bigoplus_{n\geq 1} l^2(\mathbb{N})\otimes l^2(\mathbb{N})\cong l^2(\mathbb{N})\otimes l^2(\mathbb{N})\otimes l^2(\mathbb{N})$$
Here $\cong $ means a unitary isomorphism. It is clear that the representation formulas do not need to explain anymore.
\end{proof}
In the followings, $U\xi $ and $\xi U$ denote action of the multiplication operator by $U$   for $\xi \in L^2(M_k)$ from both sides respectively.
\begin{lemma} For $\xi \in \mathfrak{R}$, the equality $U\xi =\xi U$ implies that $\xi =0$.
\end{lemma}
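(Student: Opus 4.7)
The plan is to use the explicit model of $\mathfrak{R}$ supplied by Corollary 3.5 and reduce the hypothesis $U\xi = \xi U$ to a spectral-theoretic statement about the Jacobi operator $T := S + S^*$ on $\ell^2(\mathbb{N})$. Under the unitary identification $\mathfrak{R} \cong \ell^2(\mathbb{N}) \otimes \ell^2(\mathbb{N}) \otimes \ell^2(\mathbb{N})$ of Corollary 3.5, left multiplication by $U$ corresponds to $\mathrm{id} \otimes (\delta(S+S^*) + \mathrm{id}) \otimes \mathrm{id}$ and right multiplication by $U$ to $\mathrm{id} \otimes \mathrm{id} \otimes (\delta(S+S^*) + \mathrm{id})$. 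Subtracting these two operators and cancelling the common identity contribution, the equation $U\xi = \xi U$ becomes
\[
(\mathrm{id} \otimes T \otimes \mathrm{id})\,\xi \;=\; (\mathrm{id} \otimes \mathrm{id} \otimes T)\,\xi.
\]

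Next, I would expand $\xi = \sum_i e_i \otimes \eta_i$ along an orthonormal basis of the first tensor factor, with $\eta_i \in \ell^2(\mathbb{N}) \otimes \ell^2(\mathbb{N})$. Since both sides of the displayed equation act as the identity on the first factor, the relation separates componentwise into $(T \otimes \mathrm{id})\eta_i = (\mathrm{id} \otimes T)\eta_i$ for every $i$. It therefore suffices to prove: if $\eta \in \ell^2(\mathbb{N}) \otimes \ell^2(\mathbb{N})$ satisfies $(T \otimes \mathrm{id})\eta = (\mathrm{id} \otimes T)\eta$, then $\eta = 0$.

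For this final step I would apply the spectral theorem. The operator $T = S + S^*$ is bounded self-adjoint on $\ell^2(\mathbb{N})$ and admits $e_0$ as a cyclic vector, since the three-term recursion $Te_n = e_{n+1} + e_{n-1}$ (with $e_{-1} := 0$) generates every $e_n$ inductively from $e_0$; its scalar spectral measure is the semicircular measure $d\mu(x) = \tfrac{1}{2\pi}\sqrt{4-x^2}\,dx$ on $[-2,2]$, which is absolutely continuous and in particular atomless. Hence there is a unitary equivalence $\ell^2(\mathbb{N}) \cong L^2([-2,2],\mu)$ carrying $T$ to multiplication by $x$, and the induced unitary $\ell^2 \otimes \ell^2 \cong L^2([-2,2]^2, \mu \otimes \mu)$ transforms the equation for $\eta$ into $(x-y)\,\eta(x,y) = 0$ almost everywhere. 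Since the diagonal $\{x = y\}$ has $\mu \otimes \mu$-measure zero, this forces $\eta = 0$.

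The decisive point, and the main potential obstacle, is the atomless nature of the spectral measure of $T$: this is exactly what rules out any nonzero section of the commutation equation being supported on the diagonal. A more elementary alternative would be to expand $\eta = \sum_{p,q} a_{p,q}\,e_p \otimes e_q$, translate the identity into the discrete harmonicity relation $a_{p+1,q} + a_{p-1,q} = a_{p,q+1} + a_{p,q-1}$ (with $a_{-1,q} = a_{p,-1} = 0$), and deduce by a generating-function analysis that no nonzero $\ell^2$ solution exists; that route, however, requires delicate $\ell^2$-bookkeeping and seems less transparent than the spectral argument above.
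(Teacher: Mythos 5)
Your proposal is correct, and the skeleton (pass to the model of Corollary 3.5, strip off the inert first tensor factor and the identity summands, and reduce to showing that $(T\otimes \mathrm{id})\eta=(\mathrm{id}\otimes T)\eta$ forces $\eta=0$ for $T=S+S^*$ on $\ell^2(\mathbb{N})\otimes\ell^2(\mathbb{N})$) coincides with the paper's. Where you diverge is in the final, decisive step. The paper identifies $\ell^2(\mathbb{N})\otimes\ell^2(\mathbb{N})$ with the Hilbert--Schmidt operators on $\ell^2(\mathbb{N})$, so that the relation becomes $T_\eta T=TT_\eta$; it then argues that a nonzero compact operator commuting with $T$ would produce a finite-dimensional invariant subspace on which $T$ has an eigenvalue, contradicting the classical fact (cited from Halmos) that a self-adjoint Toeplitz operator has no point spectrum. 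You instead diagonalize $T$ directly: $e_0$ is cyclic, the recursion $Te_n=e_{n+1}+e_{n-1}$ identifies the orthogonal polynomials as Chebyshev polynomials of the second kind, the spectral measure $\mu$ is the semicircle law on $[-2,2]$, and the equation becomes $(x-y)\eta(x,y)=0$ in $L^2(\mu\otimes\mu)$, which kills $\eta$ because the diagonal is $\mu\otimes\mu$-null. The two arguments rest on the same underlying fact --- $T$ has no eigenvalues, equivalently its spectral measure is atomless --- but your multiplication-operator formulation is the cleaner of the two: it sidesteps the paper's slightly delicate appeal to eigenvalues of a compact operator (a nonzero compact operator need not have a nonzero eigenvalue; one must pass to $T_\eta^*T_\eta$ to make that step airtight), and it replaces the citation of the Toeplitz no-eigenvalue theorem by an explicit and standard computation. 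Note also that you do not actually need absolute continuity of $\mu$, only atomlessness, as you correctly observe.
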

\begin{proof} From corollary 3.5 we may regard $\xi $ as an element of $l^2(\mathbb{N})\otimes l^2(\mathbb{N})\otimes l^2(\mathbb{N})$, moreover,
$$\xi =(\xi_ 1,\xi _2,\xi_3,\cdots),\quad \xi_n \in l^2(\mathbb{N})\otimes l^2(\mathbb{N}).$$
In this representation the given condition $U\xi =\xi U$ implies
$$\left((\delta (S+S^*)+id)\otimes id \right)\xi_n =\left(id\otimes (\delta (S+S^*)+id) \right)\xi_n $$ for all $n=1,2,3,\cdots$.

Now it seems to need to a little mention of tensor products between Hilbert spaces.
For given Hilbert spaces $\mathcal{H}$ and $\mathcal{K}$, let $HS(\mathcal{H},\mathcal{K})$  be the Hilbert space of Hilbert-Schmidt operators from $\mathcal{H}$ to $\mathcal{K}$.
With the conjugate Hilbert space of $\mathcal{H}$ denoted by $\mathcal{H}_C$, the following correspondence gives an unitary isomorphism between $\mathcal{H}\otimes \mathcal{K}$ and 
$HS(\mathcal{H},\mathcal{K})$:
\begin{align*}
&\qquad \qquad \mathcal{H}\otimes \mathcal{K}\ni \mu \otimes \nu \mapsto T_{\mu \otimes \nu }\in \mathcal{B}(\mathcal{H},\mathcal{K}),\\
&\qquad \qquad \qquad \qquad \qquad \qquad T_{\mu \otimes \nu }\eta :=\langle \mu ,\eta \rangle \nu,\quad \mu ,\eta \in \mathcal{H},\ \nu \in \mathcal{K}
\end{align*}
It should be emphasized that for $Q\in \mathcal{L}(\mathcal{H})$, $R\in \mathcal{L}(\mathcal{K})$ and $\zeta \in \mathcal{H}\otimes \mathcal{K}$, the Hilbert-Schmidt operator corresponding to the element $(Q\otimes R)\zeta \in \mathcal{H}\otimes \mathcal{K}$  is $RT_\zeta Q^*$.

To proceed, suppose for $\zeta \in l^2(\mathbb{N})\otimes l^2(\mathbb{N}), \zeta \neq 0$, that the following equality is true:
$$\big((\delta (S+S^*)+id)\otimes id\big)\zeta =\big(id\otimes (\delta (S+S^*)+id)\big)\zeta $$
Then by above mentioned, this equality is equivalent to $T_\zeta (S+S^*)=(S+S^*)T_\zeta $.
Here $T_\zeta $, as $T_\zeta :l^2(\mathbb{N})\rightarrow l^2(\mathbb{N})$, denotes the corresponding Hilbert-Schmidt operator as well.
What should be emphasized is that $T_\zeta $ is a compact operator commutative with $S+S^*$. Therefore the compactness guarantees the existence of a nonzero eigenvalue of $T_\zeta $ with finite multiplicity.

We denote the corresponding finite dimensional eigenspace by $V$. Due to above mentioned commutativity, $V$ is an invariant subspace even for $S+S^*$. Therefore by focusing on its restriction onto $V$, we can see that $S+S^*$ has an eigenvalue. 
On the other hand, $S+S^*$ is a typical Toeplitz operator. Since it is well known that self-adjoint Toeplitz operators have no eigenvalue(e.g., see [3]), we have $\zeta =0$.

Again, since $\xi _n=0$ for every $n=0,1,2,\cdots$, we have $\xi =0$.
\end{proof}
Consider the von Neumann subalgebra in $M_k$ generated by $U$ and $P_{0,k}$.
This is both a weak closure of $\mathfrak{A}P_{0,k}\mathfrak{A}=\mathfrak{A}P_{0,k}$ in $M_k$ and a submodule generated by $P_{0,k}$ in $M_k$ as a $A-A$ bimodule. Denote it by $AP_{0,k}$.
\begin{lemma} We have $\{ U\}'\cap M_k=AP_{0,k}$.
\end{lemma}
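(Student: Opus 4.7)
The plan is to prove the equality by two-sided inclusion, with Lemma 3.6 serving as the crucial rigidity input. The inclusion $AP_{0,k}\subset\{U\}'\cap M_k$ reduces to verifying that $U$ commutes with every element of $P_{0,k}$. This is a direct diagrammatic check from Definition 1.3: in the product tangle for $v\circ U$ with $v\in P_{0,k}$, the cap of $U$ touches only the boundary strands and is disjoint from the box holding $v$, so a planar isotopy identifies $v\circ U$ with $U\circ v$. Together with $U\in A$, this settles one direction.

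For the reverse inclusion, the central tool is the orthogonal decomposition
\begin{equation*}
L^2(M_k)\;=\;\overline{\mathfrak{A}P_{0,k}}^{L^2}\;\oplus\;\mathfrak{R},
\end{equation*}
both summands being $A$-$A$ bimodules. To establish it, I would argue by induction on $n$, using the defining splitting $P_{n,k}=V_n\oplus\operatorname{span}\{x_{1,0},x_{0,1}:x\in P_{n-1,k}\}$ together with Corollary 3.3, that each $P_{n,k}$ is the orthogonal direct sum $\bigoplus_{m+p+q=n}\{v_{p,q}:v\in V_m\}$. Regrouping across all $n$ by the head label $m$, the strata $m\geq 1$ assemble by definition into $\mathfrak{R}$, while the $m=0$ stratum (where $v_{p,q}$ depends only on $p+q$) assembles into $\mathfrak{A}P_{0,k}$; the three-term recursion of Lemma 3.4 shows that iterated left-and-right multiplication of $V_0=P_{0,k}$ by $U$ already spans the vectors $\{v_r\}$, and the commutation from step one gives $\mathfrak{A}P_{0,k}\mathfrak{A}=\mathfrak{A}P_{0,k}$.

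Given $x\in\{U\}'\cap M_k$, let $\xi\in L^2(M_k)$ be its trace vector and decompose $\xi=\eta+\zeta$ according to the orthogonal splitting. Acting on the trace vector of $I$ turns $Ux=xU$ into $U\xi=\xi U$ in $L^2(M_k)$; since the two summands are each closed under left and right $U$-multiplication, one obtains $U\zeta=\zeta U$ separately. Lemma 3.6 now forces $\zeta=0$, so $\xi\in\overline{\mathfrak{A}P_{0,k}}^{L^2}$. Because $AP_{0,k}$ is by definition the weak closure of the $\ast$-subalgebra $\mathfrak{A}P_{0,k}$ in the finite von Neumann algebra $M_k$ and the trace vector of $I$ is separating, the standard correspondence between weakly closed unital $\ast$-subalgebras of $M_k$ and the $L^2$-subspaces they generate under the trace vector yields $x\in AP_{0,k}$.

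The main obstacle is the decomposition step. One must carefully verify that the inductive splitting of each $P_{n,k}$ patches into a genuine orthogonal sum across all $n$ (invoking Corollary 3.3 across different triples $(m,p,q)$, not only within a fixed $P_{l,k}$), and that the $m=0$ stratum is exactly $\mathfrak{A}P_{0,k}$ rather than something larger, keeping track of the collapse $v_{p,q}=v_{p+q}$ for $v\in V_0$. Everything downstream is essentially formal: the transition from $\xi\in\overline{\mathfrak{A}P_{0,k}}^{L^2}$ to $x\in AP_{0,k}$ is the usual Kaplansky/Dixmier density consequence in a finite von Neumann algebra with a cyclic and separating trace vector.
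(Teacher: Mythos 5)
Your proof is correct and follows essentially the same route as the paper: the easy inclusion via diagrammatic commutation, the orthogonal bimodule decomposition $L^2(M_k)=\overline{\mathfrak{A}P_{0,k}}\oplus\mathfrak{R}$ built inductively from $P_{n+1,k}=\omega_1P_{n,k}\oplus P_{n,k}\omega_1\oplus V_{n+1}$ together with Corollary 3.3, and Lemma 3.6 killing the $\mathfrak{R}$-component of the trace vector. The only (harmless) divergence is the final step, where you invoke the standard conditional-expectation/separating-vector argument to pass from $\hat{x}\in\overline{\mathfrak{A}P_{0,k}}^{L^2}$ to $x\in AP_{0,k}$, whereas the paper derives $\overline{AP_{0,k}}\cap M_k=AP_{0,k}$ directly from the orthogonal decomposition; both are valid.
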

\begin{proof} From $\mathfrak{A}P_{0,k}\subset \{ U\}'\cap M_k$, it is clear that $AP_{0,k}\subset \{ U\}'\cap M_k$.

Before checking the opposite inclusion, let us see that there is decomposition into orthogonal direct sum:
$$H_k(P)=\bigoplus_{n=0}^\infty \mathfrak{A} V_n \mathfrak{A}=\mathfrak{A}P_{0,k}\oplus \bigoplus_{n\geq 1}\mathfrak{A} V_n \mathfrak{A}$$
                      
It is sufficient to verify for every $n\geq 0$ the following inclusion:
$$P_{n,k}\subset \mathfrak{A}P_{0,k}\oplus \bigoplus_{n\geq 1}\mathfrak{A}V_n\mathfrak{A}$$

Let us go inductively. It is trivial for $n=0$.

For the sake of convenience, denote by $\omega _p$ the operation of adding $p$ cups to the tangles denoting elements of $P_{n,k}$ on the left or right side and then multiplying $\delta ^{-p}$ upon our convention of picturing. In other words, for $\omega _p$ the following condition is assumed:
$$x_{p,q}=\omega _p x\omega _q \text{ \ for } x\in P_{n,k}$$
Then from corollary 3.3 and lemma 3.4, we have the following orthogonal decompositions:
$$\mathfrak{A}P_{0,k}=\bigoplus_{n\geq 0}P_{0,k}\omega _n,\quad \mathfrak{A}V_m \mathfrak{A}=\bigoplus_{p,q\geq 0}\omega _p V_m \omega _q,\ m\geq 1$$  
Therefore the assumption $P_{n,k}\subset \mathfrak{A}P_{0,k}\oplus \bigoplus_{n\geq 1}\mathfrak{A}V_n\mathfrak{A}$ for $n$ is equivalent to the following decomposition:
\begin{align*}
& P_{n,k}=\omega_n P_{0,k}\oplus\left(\bigoplus_{p+q=n-1}\omega_p V_1\omega _q\right)\oplus\left(\bigoplus_{p+q=n-2}\omega _p V_2 \omega _q\right)\oplus \cdots \\
&\qquad \qquad \qquad \qquad \qquad \qquad \qquad \qquad \qquad \cdots \oplus \big(\omega _1 V_{n-1}\oplus V_{n-1}\omega _1 \big)\oplus V_n 
\end{align*}
It follows from the definition of $V_m$ that
$$P_{1,k}=\omega_1 P_{0,k}\oplus V_1;\quad P_{n+1,k}=\omega_1 P_{n,k}\oplus P_{n,k}\omega_1\oplus V_{n+1},\ n\geq 1.$$
Moreover, in view of the obvious equality $\omega_p\omega_q=\omega_{p+q}$, we have
$$P_{n+1,k}\subset \mathfrak{A}P_{0,k}\oplus \bigoplus_{m\geq 1}\mathfrak{A}V_m\mathfrak{A}.$$
Therefore the following orthogonal decomposition into $\mathfrak{A}-\mathfrak{A}$ modules is given.
$$H_k(P)=\mathfrak{A}P_{0,k}\oplus \bigoplus_{m\geq 1}\mathfrak{A}V_m\mathfrak{A}$$

Furthermore an orthogonal decomposition $L^2(M_k)=\overline{AP_{0,k}}\oplus \mathfrak{R}$ into $A-A$ modules is obtained through completion. Here the bar over letters indicates closure relative to the inner product.

Since, regarding $M_k$ as $M_k\subset L^2(M_k)$, the weak operator topology of $M_k$ is weaker than the norm topology of $L^2(M_k)$, we obtain $\overline{\bigoplus_{m\geq 1}\mathfrak{A}V_m\mathfrak{A}}^{wo}\subset\mathfrak{R}$.
Again, in view of the orthogonal decomposition
$$M_k=AP_{0,k}\oplus \overline{\bigoplus_{m\geq1}\mathfrak{A}V_m\mathfrak{A}}^{wo}\subset\mathfrak{R},$$
the equality $\overline{AP_{0,k}}\cap M_k=AP_{0,k}$ is obtained.

On the other hand, due to lemma 3.6, if $U\xi =\xi U$ for $\xi\in L^2(M_k)$, then we have $\xi\in \overline{AP_{0,k}}$. Therefore we have $\{ U\}'\cap M_k\subset \overline{AP_{0,k}}$  and referring to above obtained equality gives the following desired inclusion:
$$\{U\}'\cap M_k \subset \overline{AP_{0,k}}\cap M_k=AP_{0,k}$$
\end{proof}
\begin{theorem} Suppose $\delta >1$. Then we have $M_0'\cap M_k=P_{0,k}\big(=P_k \big)$.
\end{theorem}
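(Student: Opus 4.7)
The overall strategy is a two-step commutant reduction. The easy inclusion $P_{0,k} \subset M_0' \cap M_k$ follows by unwinding the diagrammatic definition of the product $\circ$: elements of $P_{0,k}$, having no upper ``band'' part, absorb into the outer $k$-string boundary of any element in the image of $\mathcal{I}_0^k(H_0(P))$ without modification, so they commute pointwise with the generators of $M_0$, and normality of multiplication propagates this to all of $M_0$.

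For the nontrivial direction, I would first check that the generator $U$ of Section~3 lies in $M_0$: its tangle is built from strings and cups alone (no decorated boxes), so $U$ is the image under $\mathcal{I}_0^k$ of the analogous element in $H_0(P)$, and Lemma~2.7 places $U \in M_0 \subset M_k$. For any $x \in M_0' \cap M_k$ we thus have $Ux = xU$, so Lemma~3.7 gives
\[
M_0' \cap M_k \;\subset\; \{U\}' \cap M_k \;=\; AP_{0,k}.
\]
The second reduction then consists of intersecting further with the commutant of another generator of $M_0$---namely the element $W$ introduced alongside $U$ at the start of Section~3, which is also manifestly in $\mathcal{I}_0^k(H_0(P))$. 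Using the decomposition $\mathfrak{A}P_{0,k} = \bigoplus_{n\geq 0} P_{0,k}\omega_n$ from the proof of Lemma~3.7, an arbitrary $\xi \in \overline{AP_{0,k}}$ expands as $\xi = \sum_{n\geq 0} x_n \omega_n$ with $x_n \in P_{0,k}$. Interpreting left and right multiplication by $W$ on this basis as operators of the form $\delta(S+S^*)+\mathrm{id}$ on $\ell^2(\mathbb{N})$---the content of a Lemma~3.4 analogue for $W$---the requirement $W\xi = \xi W$ reduces, via the Hilbert--Schmidt identification $\ell^2 \otimes \ell^2 \cong \mathrm{HS}(\ell^2,\ell^2)$ used in Lemma~3.6, to the relation $T(S+S^*) = (S+S^*)T$ for a compact operator $T$ encoding the tail $\sum_{n\geq 1} x_n \omega_n$. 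The no-eigenvalue property of the self-adjoint Toeplitz operator $S+S^*$ then forces $T=0$, hence $x_n = 0$ for all $n\geq 1$, leaving $\xi = x_0 \in P_{0,k}$.

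The main obstacle is this second reduction: one must pinpoint the companion element $W \in M_0$ and verify diagrammatically that its action on the orthonormal family $\{\omega_n\}$ (as opposed to the $\{u_{p,q}\}$ basis used in Lemma~3.4) is again Toeplitz of the form $\delta(S+S^*)+\mathrm{id}$. The hypothesis $\delta > 1$ enters precisely here, guaranteeing that $\delta(S+S^*)+\mathrm{id}$ is a genuine non-scalar self-adjoint Toeplitz operator so that the argument of Lemma~3.6 applies unchanged; once that is achieved, combining the two reductions yields $M_0' \cap M_k = P_{0,k}$.
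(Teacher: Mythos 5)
Your first reduction agrees with the paper: both $U$ and $W$ are box-free tangles lying in $\mathcal{I}_0^k(H_0(P))\subset M_0$, so $M_0'\cap M_k\subset\{U,W\}'\cap M_k$, and Lemma~3.7 collapses this to $\{W\}'\cap AP_{0,k}$. The gap is in your second reduction. The space $\overline{AP_{0,k}}=\bigoplus_{n\geq 0}P_{0,k}\,\omega_n$ is indexed by a \emph{single} integer $n$, because for $x\in P_{0,k}$ the element $x_{p,q}$ depends only on $p+q$; there is no $\ell^2(\mathbb{N})\otimes\ell^2(\mathbb{N})$ tensor factorization separating left cups from right cups. The Hilbert--Schmidt trick of Lemma~3.6 works precisely because on $\mathfrak{A}u\mathfrak{A}$ with $u\in V_n$, $n\geq 1$, left and right multiplication act on \emph{different} tensor factors, so the commutation relation becomes $T(S+S^*)=(S+S^*)T$ for a compact $T$. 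On $\{\omega_n\}$ that mechanism has nothing to bite on: if, as you propose, both left and right multiplication by $W$ acted on $\{\omega_n\}$ as the same Toeplitz operator $\delta(S+S^*)+\mathrm{id}$, the commutator would vanish identically and you could conclude nothing (indeed, that is exactly why $AP_{0,k}\subset\{U\}'$ in the first place). Moreover $W$ does not even preserve $\overline{AP_{0,k}}$, so no such Toeplitz description of its two-sided action on this subspace exists.

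The paper's actual mechanism is different. One sets $\alpha_n=W\omega_n-\omega_nW\in P_{n+1,k}$ and computes diagrammatically $\langle\alpha_n,\alpha_n\rangle=2(\delta^2-1)$; this is where $\delta>1$ enters (not, as you suggest, in making $\delta(S+S^*)+\mathrm{id}$ non-scalar, which holds for every $\delta>0$). Then $W\circ c-c\circ W=\sum_{n\geq 1}(c_n+c_{n+1})\circ\alpha_n$ is an orthogonal expansion, and the multiplicativity $\langle a\circ\alpha,a\circ\alpha\rangle=\langle a,a\rangle\langle\alpha,\alpha\rangle$ for $a\in P_{0,k}$ forces $c_{n+1}=-c_n$ for all $n\geq 1$; square-summability of $\sum_n\langle c_n,c_n\rangle$ then kills every $c_n$ with $n\geq 1$. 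You would need to supply this (or an equivalent) argument; the compact-operator/no-eigenvalue route cannot be transplanted to $\overline{AP_{0,k}}$.
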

\begin{proof} Since we have clearly $P_{0,k}\subset M_0'\cap M_k$, it is sufficient to prove the opposite inclusion.
In view of the inclusion
$$M_0'\cap M_k\subset \{U,W\}'\cap M_k,$$
let us consider $\{U,W\}'\cap M_k$. Moreover this is equal to $\{W\}'\cap AP_{0,k}$ by the previous lemma, and again by the decomposition $\mathfrak{A}P_{0,k}=\bigoplus_{n\geq 0}P_{0,k}\omega _n$ in its proof, we have
$$\overline{AP_{0,k}}=\bigoplus_{n\geq 0}P_{0,k}\omega _n.$$
Here the direct sum in the latter is the orthogonal decomposition as a Hilbert space.
Therefore every element in $\overline{AP_{0,k}}$ has unique formal expansion:
$$c=\sum_{n=0}^\infty c_n\omega _n=\sum_{n=0}^\infty c_n\circ I_n,\quad c_n\in P_{0,k},\ \sum_{n=0}^\infty \langle c_n,c_n\rangle <\infty $$
According to our convention, $I_n$ is represented by the tangle in the following picture, which is clearly equal to $\omega _nI$ for $I\in P_{0,k}$.
\begin{figure}[ht]
\centering
\includegraphics[width=0.3\textwidth]{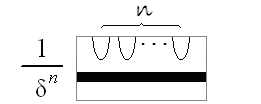}
\end{figure}
\\
Putting $\alpha _n=W\omega _n-\omega _nW$, we have $\alpha _n\in P_{n+1,k}$ and especially $\alpha _0=0$.

On the other hand, for $n\geq 1$, in view of
\begin{figure}[ht]
\centering
\includegraphics[width=0.35\textwidth]{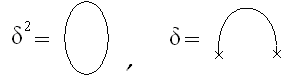}
\end{figure}
\\
through diagrammatic calculation of the inner product, we obtain
$$\langle \alpha _n,\alpha _n\rangle=2(\delta ^2-1).$$
Here $\alpha _n\neq 0$ due to $\delta >1$.
Consequently, from
\begin{align*}
W\circ I_n-I_n\circ W
&=(W\omega _n+W\omega _{n-1}+\delta I_{n-1})-(\omega _nW+\omega _{n-1}W+\delta I_{n-1})\\
&= \alpha _n+\alpha _{n-1}
\end{align*}
we obtain:
$$W\circ c-c\circ W=\sum_{n=1}^\infty c_n\circ (\alpha _n+\alpha _{n-1})=\sum_{n=1}^\infty (c_n+c_{n+1})\circ \alpha _n$$

Suppose $W\circ c=c\circ W$ for $c \in \overline{AP_{0,k}}$. Then, since above expansion is orthogonal, we have $(c_n+c_{n+1})\circ \alpha _n=0$ for all $n\geq 1$.

Now let us recall that the quality $a\circ \alpha=0$ always implies either $a=0$ or $\alpha =0$, because it is easy to see that
$\langle a\circ \alpha ,a\circ \alpha \rangle =\langle a,a\rangle \langle \alpha ,\alpha \rangle $ for any $a \in P_{0,k}$ and $\alpha \in P_{n,0}$.

Since $\alpha \neq 0$ is known for all $n\geq 1$, by proceeding above consideration, we obtain $c_n+c_{n+1}=0$, i.e. $c_n=-c_{n+1}$.
Therefore by recalling $\sum_{n=0}^\infty \langle c_n,c_n\rangle <\infty $, we have $c_n=0$ for all $n\geq 1$ and after all we conclude $c \in P_{0,k}$.

As a result, we have $\{W\}'\cap AP_{0,k} \subset P_{0,k}$, which implies $M_0'\cap M_k \subset P_{0,k}$.
\end{proof}
\begin{remark} We can get formally the proof of above theorem from [6] (4.2-4.11) by changing the meaning of lines with black bands and, in the places where the factor $\sqrt{\delta }$ appears, replacing it by $\delta $. Besides, in our discussion above, even by changing the roles $U$ and $W$ with each other, a similar proof will be given as well.
\end{remark}
\begin{corollary} Under the same assumption, i.e. $\delta >1$, $M_k$ is a $II_1$ factor.
\end{corollary}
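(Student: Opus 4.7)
My plan is to verify that $M_k$ is a finite von Neumann algebra, that it is infinite-dimensional, and that it is a factor; together, these force $M_k$ to be of type $II_1$ (since a finite, infinite-dimensional factor with a faithful normal tracial state is automatically $II_1$). The first two properties come essentially for free from the earlier work: Remark 2.8 provides the faithful trace $\operatorname{tr}(x)=\langle xI,I\rangle$, and the Hilbert algebra $H_k(P)=\bigoplus_{n\geq 0}P_{n,k}$ has infinitely many nontrivial summands for $\delta>1$, so $M_k$ is infinite-dimensional.

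For the factor property, my main input is Theorem 3.8: it forces $Z(M_k)=M_k'\cap M_k\subseteq M_0'\cap M_k=P_{0,k}=P_k$, so the center is trapped inside the finite-dimensional $C^*$-algebra $P_k$. In the special case $k=0$, this already finishes the argument, since $P_0=\mathbb{C}$ by the subfactor planar algebra hypothesis, hence $Z(M_0)=\mathbb{C} I$ and $M_0$ is a factor.

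For $k\geq 1$, Theorem 3.8 alone only shows $Z(M_k)$ is finite-dimensional, and this is the main obstacle: the algebra $P_k$ is typically non-trivial. I would then exploit that a central element $z\in P_{0,k}$ must commute with all of $M_k$, not just $M_0$; in particular with $P_{0,k}$ itself (forcing $z\in Z(P_k)$) and with the higher-degree elements such as the $\omega_n\in P_{n,k}$ and with the generators $U,W$ at every level. A pictorial argument in the spirit of the proof of Theorem 3.8, but carried out at the higher components $P_{n,k}$ of $H_k(P)$, should collapse $Z(M_k)$ to $\mathbb{C} I$. The delicate point will be verifying that these higher-level commutation constraints are strong enough to rule out every non-scalar central element --- equivalently, that the non-triviality of the inclusions $P_k\hookrightarrow P_{k+2}\hookrightarrow\cdots$ for $\delta>1$ prevents any non-scalar element of $Z(P_k)$ from centralizing all of $M_k$.
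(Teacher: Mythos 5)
Your overall frame is the right one and matches the paper's: by Theorem 3.8 the center is trapped in $P_{0,k}=P_k$, the case $k=0$ is immediate since $P_0=\mathbb{C}$, and for $k\geq 1$ one must use commutation with elements lying \emph{outside} $P_{0,k}$ to collapse the center to the scalars. The preliminary reductions (finiteness from the trace of Remark 2.9, infinite-dimensionality, hence type $II_1$ once factoriality is known) are also fine, though the paper takes them for granted.

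However, for $k\geq 1$ your proposal stops exactly where the actual work begins. The statement that a pictorial argument ``should collapse $Z(M_k)$ to $\mathbb{C}I$'' and that ``the delicate point will be verifying that these higher-level commutation constraints are strong enough'' is a declaration of intent, not a proof; that delicate point \emph{is} the content of the corollary. Moreover, the specific elements you name are not the ones that do the job: commuting with $U$ and $W$ is already exhausted by Theorem 3.8 (that is precisely how one gets down to $P_{0,k}$ in the first place), and commuting with $P_{0,k}$ itself only yields $z\in Z(P_k)$, which is still finite-dimensional but generally larger than $\mathbb{C}$. What the paper actually does is exhibit one concrete element $\Lambda\in P_{k'+1,k}\subset H_k(P)$ (a tangle that reconnects the $k$ side strings of a $P_{0,k}$-box over the top), and show that for $a\in P_{0,k}$ orthogonal to $I$ (equivalently $Tr(a)=Tr(a^*)=0$) the commutation $a\circ\Lambda=\Lambda\circ a$ forces
$$\delta^{k}\,Tr(a^*a)=\langle a\circ\Lambda,a\circ\Lambda\rangle=\langle\Lambda\circ a,a\circ\Lambda\rangle=Tr(a)\,Tr(a^*)=0,$$
because the tangle for $\langle\Lambda\circ a,a\circ\Lambda\rangle$ disconnects into a product of two traces. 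Nondegeneracy of the trace then gives $a=0$, so $\{\Lambda\}'\cap P_{0,k}=\mathbb{C}I$. Without identifying such an element and the trace identity it produces, the proof is not complete.
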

\begin{proof} Let $\mathcal{Z}(M_k)$ denote the center of $M_k$. It is sufficient to prove that $\mathcal{Z}(M_k)=\mathbb{C}$ is true.

First of all, by the previous theorem we have $\mathcal{Z}(M_k)\subset P_{0,k}$.
Now let us examine the following tangle $\Lambda \in H_k(P)\big(\subset M_k \big)$:
\begin{figure}[ht]
\centering
\includegraphics[width=0.13\textwidth]{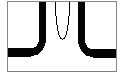}
\end{figure}
\\
What is to be careful is the explanation of the lines running upward:
At a look, it is clear that the thick lines should have the same meaning no matter what direction they run toward. We assume that all the thick lines mean bundles of $k$ strings.
But the understanding of the thin one running upward is different according to parity of $k$.
While in case $k=2k'$, it means a black band as before, it means a string in case $k=2k'+1$.
On the whole, we have $\Lambda \in P_{k'+1,k}$.

To proceed with, for any fixed $a \in P_{0,k}$ orthogonal to the unit $I$, suppose its commutativity with $\Lambda $.
Notice that the orthogonality to $I$ is equivalent to the equality $Tr(a)=Tr(a^*)=0$.
Then the equality
$$\langle a\circ\Lambda ,a\circ\Lambda \rangle =\langle \Lambda \circ a,a\circ\Lambda \rangle$$implied from $a\circ \Lambda =\Lambda \circ a$, means the equivalence of the following two tangles:
\begin{figure}[ht]
\centering
\includegraphics[width=0.65\textwidth]{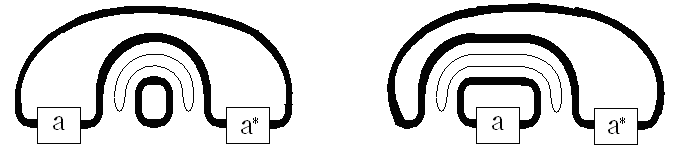}
\end{figure}
\\
Since this equivalence means $\delta ^k Tr(a^*a)=Tr(a)Tr(a^*)$, we have $Tr(a^*a)=0$.
This, in view of the nondegeneracy of the trace, implies $a=0$.
Therefore we have
$$\{\Lambda \}'\cap P_{0,k}\subset \mathbb{C}\big(=\mathbb{C}I\big).$$

Since it is clear that $\mathbb{C}\subset \{\Lambda \}'\cap P_{0,k}$, after all, we conclude $\mathcal{Z}(M_k)=\mathbb{C}$.
\end{proof}

\section{the invariant of subfactors and the planar algebras}
Let us study the relation between the inclusions of $II_1$ factors
$$M_0 \subset M_1 \subset M_2 \subset \cdots$$
and the basic construction for them.
\begin{definition} 1) According to our convention, for every $k=0,1,2,\cdots$, as an element in $P_{0,k+1}\big(\subset P_{k+1}\big)$, the ordinary Jones projection tangle $E_k$ is given by left tangle in the figure bellow(Fig.7). 
We denote $\delta^{-1} E_k$ by $e_k$. It represents clearly an idempotent in $M_{k+1}$.

2) Let us fix $l=1,2,3,\cdots$. Then for every $k=0,1,2,\cdots$ a mapping from $H_{k+l}(P)$ to $H_k(P)$ is given componentwise by the ordinary conditional expectation tangles from $P_{2n+k+l}$ to $P_{2n+k}$, $n=0,1,2,\cdots$, every of which as a mapping from $P_{n,k+l}$ to $P_{n,k}$, according to our convention, would be seen as right one in the following figure.
\begin{figure}[ht]
\centering
\includegraphics[width=0.55\textwidth]{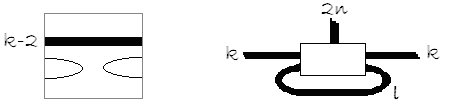}
\caption{}
\end{figure}
\\
We denote this mapping $H_{k+l}(P)\rightarrow H_{k}(P)$ by $\mathcal{E}_k^{k+l}$.
\end{definition}
\begin{remark} It is not difficult to see that the mapping $\delta ^{-l}\mathcal{E}_k^{k+l}$ agrees with the restriction of the conditional expectation $E_{M_k}:M_{k+l}\rightarrow M_k$  onto $H_{k+l}(P)$.
\end{remark}
\begin{lemma} For every $e_k,\ k=1,2,3,\cdots$, we have $E_{M_k}(e_k)=\delta^{-2}$ and moreover the following relations are satisfied:
$$tr(xe_k)=\delta^{-2}tr(x),\ e_kxe_k=E_{k-1}(x)e_k,\ x\in M_k$$
\end{lemma}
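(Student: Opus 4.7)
The plan is to establish the three statements one at a time, using the planar-diagrammatic description of $e_k$ together with Remark 4.2, which identifies $E_{M_k}|_{H_{k+1}(P)}$ with $\delta^{-1}\mathcal{E}_k^{k+1}$.

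For $E_{M_k}(e_k)=\delta^{-2}$, I would apply $\mathcal{E}_k^{k+1}$ directly to the Jones projection tangle $E_k\in P_{0,k+1}$ of Definition 4.1. The conditional-expectation tangle caps off one string/band on each side of the box; these caps meet the cups built into $E_k$ and, after planar isotopy, collapse to the trivial diagram in $P_{0,k}$ together with one stray closed loop that contributes a factor $\delta$. Hence $\mathcal{E}_k^{k+1}(E_k)=\delta\cdot 1_{P_{0,k}}$, and combining this with the two factors of $\delta^{-1}$ (one from $e_k=\delta^{-1}E_k$ and one from Remark 4.2) yields $E_{M_k}(e_k)=\delta^{-2}\cdot 1$.

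The trace identity is then a short computation: for any $x\in M_k$, the $M_k$-bimodule property of $E_{M_k}$ together with the $E_{M_k}$-invariance of $tr$ give
$$tr(xe_k)=tr\bigl(E_{M_k}(xe_k)\bigr)=tr\bigl(xE_{M_k}(e_k)\bigr)=\delta^{-2}tr(x).$$

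For the Markov relation $e_kxe_k=E_{k-1}(x)e_k$, I would first verify it on the weakly dense subalgebra $H_k(P)\subset M_k$ by diagrammatic calculation, then extend by weak continuity. For $x\in P_{n,k}$, drawing the graded product $e_k\circ x\circ e_k$ using Definition 2.3 shows that the cups of the two surrounding $e_k$'s pinch off precisely the strings of $x$ that get capped by the conditional-expectation tangle $\mathcal{E}_{k-1}^k$, while leaving the surviving horizontal cap of a single $e_k$ intact. Planar isotopy then identifies the resulting tangle with $\mathcal{E}_{k-1}^k(x)\circ e_k$, and the bookkeeping of normalization factors ($\delta^{-1}$ from each $e_k$, the $\delta^{-1}$ built into $E_{k-1}=\delta^{-1}\mathcal{E}_{k-1}^k$, and any stray closed loops that arise) is designed to leave an overall coefficient of $1$, producing $E_{k-1}(x)\circ e_k$ in $M_{k+1}$. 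Since both sides are weakly continuous in $x$ on bounded subsets of $M_k$ (using that left and right multiplication by the bounded element $e_k$ are weakly continuous and that the conditional expectation $E_{k-1}$ is normal) and $H_k(P)$ is weakly dense in $M_k$, the identity extends to all of $M_k$.

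The main obstacle will be the delicate bookkeeping of normalization factors of $\delta$ scattered throughout the construction — in the definition of $e_k$, in the relationship between $E_{M_k}$ and $\mathcal{E}_k^{k+1}$, in the normalized inner product $\langle\cdot,\cdot\rangle_k=\delta^{-k}Q_k$, and in the graded product — together with the careful identification of which strings get capped in the diagrammatic reduction of $e_k\circ x\circ e_k$. The planar isotopies themselves are routine in the style of the previous section, but matching the combinatorics of caps and cups with the conditional-expectation tangle, while keeping the $\delta$-powers balanced, is where care is needed.
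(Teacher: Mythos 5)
Your overall strategy is the same as the paper's: the first identity follows from Definition 4.1 together with Remark 4.2, and the Markov relation is checked diagrammatically on the weakly dense subalgebra $H_k(P)$ and extended by normality. Your derivation of the trace identity is actually cleaner than the paper's: you deduce $tr(xe_k)=\delta^{-2}tr(x)$ algebraically from the first identity via the bimodule property and trace-preservation of $E_{M_k}$, whereas the paper reduces to $x\in P_{0,k}$ using Remark 2.9(2) and then argues diagrammatically. That algebraic shortcut is valid and worth keeping.

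However, your computation of $E_{M_k}(e_k)$ contains a concrete error: there is no ``stray closed loop'' in $\mathcal{E}_k^{k+1}(E_k)$. The cap of the conditional-expectation tangle joins the outermost top point of $E_k$ to its outermost bottom point; since in $E_k$ those two points are already joined by the cup (resp.\ cap) to the adjacent boundary points, the three arcs concatenate into a single \emph{through} strand, yielding $\mathcal{E}_k^{k+1}(E_k)=1_{P_k}$ with no closed component. Indeed your own bookkeeping betrays the slip: with $e_k=\delta^{-1}E_k$ and $E_{M_k}=\delta^{-1}\mathcal{E}_k^{k+1}$ on $H_{k+1}(P)$, the value $\mathcal{E}_k^{k+1}(E_k)=\delta\cdot 1$ would give $E_{M_k}(e_k)=\delta^{-1}\cdot\delta^{-1}\cdot\delta=\delta^{-1}$, not the claimed $\delta^{-2}$. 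One can also see independently that $\mathcal{E}_k^{k+1}(E_k)=1$ is forced: $E_{M_k}(E_k)$ is a scalar, trace-preservation gives that scalar as $tr(E_k)=\delta\,tr(e_k)=\delta^{-1}$, and hence $\mathcal{E}_k^{k+1}(E_k)=\delta\cdot\delta^{-1}=1$. With the loop removed, your argument for all three identities goes through; the sketch of $e_k\circ x\circ e_k=E_{M_{k-1}}(x)\circ e_k$ is consistent (one checks at $x=1$ that $E_kE_k=\delta E_k$ matches $\mathcal{E}_{k-1}^{k}(1)=\delta$), though you should make the normalization count there explicit rather than asserting it is ``designed to'' come out to $1$.
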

\begin{proof} The first equality is clear from above definition and the remark.

Let us examine the others. By weak continuity of the trace and the conditional expectations, there is no loss of generality to see it only in case of $x\in H_k(P)$.
And moreover, this case is reduced to the case where $x\in P_{n,k}$.
Furthermore to see the equality $tr(xe_k)=\delta^{-2}tr(x)$, in view of remark 2.9, 2) it is sufficient with only case $x\in P_{0,k}$. The rest is trivial from diagrammatic consideration.

For the last one we come to check equality of the following two tangles, the fact which is obvious:
\begin{figure}[ht]
\centering
\includegraphics[width=0.7\textwidth]{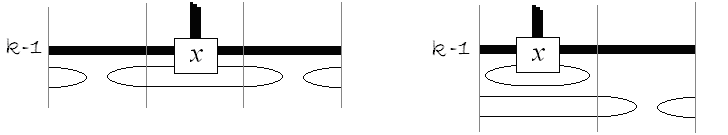}
\end{figure}
\\
\end{proof}
We shall restrict our attention to $M_0 \subset M_1 \subset M_2$.
Let us denote by $\mathfrak{M}_2$ the von Neumann subalgebra in $M_2$ generated by $M_1$ and $e_1$ and focus our attention on it.
\begin{corollary} For every $x\in \mathfrak{M}_2$, there is unique $m\in M_1$ such that $xe_1=me_1$.
\end{corollary}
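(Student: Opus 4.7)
The plan is to construct the element $m$ explicitly via the formula $m := \delta^{2}E_{M_{1}}(xe_{1})$, verify the identity $xe_{1}=me_{1}$ first on a weakly dense $\ast$-subalgebra by direct manipulation, and then extend the identity to all of $\mathfrak{M}_{2}$ by normality. Uniqueness will fall out of a short trace calculation.

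For uniqueness, suppose $m\in M_{1}$ satisfies $me_{1}=0$. Then $(me_{1})(me_{1})^{\ast}=me_{1}m^{\ast}=0$, whence $tr(me_{1}m^{\ast})=tr(m^{\ast}m\,e_{1})=0$; by Lemma 4.3 this trace equals $\delta^{-2}tr(m^{\ast}m)$, so faithfulness of the trace on the $II_{1}$ factor $M_{1}$ (Corollary 3.10) forces $m=0$.

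For existence, consider the subspace $\mathcal{B}:=M_{1}+M_{1}e_{1}M_{1}$. The relation $e_{1}ae_{1}=E_{M_{0}}(a)e_{1}$ from Lemma 4.3, together with $E_{M_{0}}(a)\in M_{0}\subset M_{1}$, implies $(M_{1}e_{1}M_{1})\cdot(M_{1}e_{1}M_{1})\subset M_{1}e_{1}M_{1}$; hence $\mathcal{B}$ is a $\ast$-subalgebra that contains both $M_{1}$ and $e_{1}$, and is therefore weakly dense in $\mathfrak{M}_{2}$. For a typical element $x=a+\sum_{i}b_{i}e_{1}c_{i}\in\mathcal{B}$, the same relation gives
\[ xe_{1}=ae_{1}+\sum_{i}b_{i}E_{M_{0}}(c_{i})\,e_{1}=\Bigl(a+\sum_{i}b_{i}E_{M_{0}}(c_{i})\Bigr)e_{1}, \]
and applying $\delta^{2}E_{M_{1}}$ to both sides (using $E_{M_{1}}(e_{1})=\delta^{-2}$ from Lemma 4.3, which makes $\delta^{2}E_{M_{1}}(\,\cdot\,e_{1})$ a left inverse to right multiplication by $e_{1}$ on $M_{1}e_{1}$) identifies the coefficient on the right as $\delta^{2}E_{M_{1}}(xe_{1})$.

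To promote the identity $xe_{1}=\delta^{2}E_{M_{1}}(xe_{1})\cdot e_{1}$ from $\mathcal{B}$ to all of $\mathfrak{M}_{2}$, I note that both sides are normal functions of $x$: right multiplication by the bounded operator $e_{1}$ is WOT-continuous, and $E_{M_{1}}$ is a normal conditional expectation. By Kaplansky density, every $x\in\mathfrak{M}_{2}$ is the SOT-limit of a norm-bounded net in $\mathcal{B}$, so the identity passes to the limit and furnishes $m:=\delta^{2}E_{M_{1}}(xe_{1})\in M_{1}$ with $xe_{1}=me_{1}$. The main conceptual ingredient is the collapsing relation $e_{1}M_{1}e_{1}\subset M_{0}e_{1}$ that makes $\mathcal{B}$ a $\ast$-algebra; the main technical point is the continuity passage, but since right multiplication and $E_{M_{1}}$ are both normal, no real obstacle arises there.
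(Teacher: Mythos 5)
Your proof is correct and follows essentially the same route as the paper's: verify $xe_1=\bigl(a+\sum_i b_iE_{M_0}(c_i)\bigr)e_1$ on the weakly dense $\ast$-subalgebra $M_1+M_1e_1M_1$ using the collapsing relation $e_1ye_1=E_{M_0}(y)e_1$, recover $m$ as $\delta^2E_{M_1}(xe_1)$, and pass to general $x$ by normality. (Your exponent $\delta^{2}$ is the correct one --- the paper's displayed formula $m=\delta^{-2}E_{M_1}(xe_1)$ is a typo --- and your uniqueness argument via faithfulness of the trace is a harmless variant of the paper's, which simply applies $E_{M_1}$ to both sides of $xe_1=me_1$.)
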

\begin{proof} Suppose that for $x\in \mathfrak{M}_2$, there is $m\in M_1$ such that $xe_1=me_1$. Then apply to both sides of the equality the conditional expectation $E_{M_1}$.

By lemma 4.3, $E_{M_k}(e_k)=\delta^{-2}$, we get $m=\delta^{-2} E_{M_1}(xe_1)$.
Again due to weak continuity of the conditional expectation, the only thing we have to do is to verify the existence of $m\in M_1$ satisfying $xe_1=me_1$ for the elements $x$ in a subspace dense in $\mathfrak{M}_2$.

Now it is sufficient to point out for $x=a+\sum a_ie_1b_i,\ a,a_i,b_i\in M_1$, the following equality which is obtained from lemma 4.3 immediately:
$$xe_1=\big(a+\sum a_i E_{M_0}(b_i)\big)e_1$$ 
\end{proof}
\begin{lemma} $\mathfrak{M}_2$ is a $II_1$ factor.
\end{lemma}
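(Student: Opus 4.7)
The plan is to separate finiteness from factoriality. Since $M_2$ is a $II_1$ factor by Corollary 3.10 and $\mathfrak{M}_2 \subset M_2$, the restriction of the canonical trace on $M_2$ to $\mathfrak{M}_2$ is a faithful normal tracial state, so $\mathfrak{M}_2$ is a finite von Neumann algebra. As $\mathfrak{M}_2$ contains the $II_1$ factor $M_1$, it is infinite-dimensional; hence the proof reduces to showing $Z(\mathfrak{M}_2)=\mathbb{C}$.

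Let $z \in Z(\mathfrak{M}_2)$. By Corollary 4.4 there is a unique $m\in M_1$ with $ze_1 = me_1$. Commuting $z$ past an arbitrary $y\in M_1\subset\mathfrak{M}_2$ yields $me_1 y = yme_1$. Applying the conditional expectation $E_{M_1}\colon M_2\to M_1$ (which by Remark 4.2 is $\delta^{-1}\mathcal{E}_1^2$), using $M_1$-bimodularity together with $E_{M_1}(e_1)=\delta^{-2}$ and $E_{M_1}(e_1 y)=\delta^{-2}y$ (both consequences of Lemma 4.3 via the trace characterization of $E_{M_1}$), one obtains $\delta^{-2}my = \delta^{-2}ym$, hence $my=ym$ for all $y\in M_1$. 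Since $M_1$ is a factor, $m=\lambda\cdot 1$ for some $\lambda\in\mathbb{C}$, and therefore $ze_1 = \lambda e_1$.

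Setting $w := z - \lambda\in Z(\mathfrak{M}_2)$, we have $we_1 = 0$ and also $e_1 w = 0$ (since $w$ commutes with $e_1$). For $a,b\in M_1$, centrality of $w$ gives $w(ae_1 b) = a(we_1)b = 0$, so $w$ annihilates every element of $M_1 + M_1 e_1 M_1$. The concluding step invokes a Pimsner--Popa basis $\{\lambda_i\}\subset M_1$ for the finite-index inclusion $M_0\subset M_1$ (with index $\delta^2$), satisfying $\sum_i \lambda_i e_1 \lambda_i^* = 1$ strongly; then
\[
w = w\cdot 1 = \sum_i \lambda_i(we_1)\lambda_i^* = 0,
\]
so $z=\lambda$, finishing the proof.

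The main obstacle is the final step, namely showing that the central support of $e_1$ in $\mathfrak{M}_2$ is $1$ (equivalently, that $M_1 e_1 M_1$ is strongly dense in $\mathfrak{M}_2$, or that a Pimsner--Popa basis exists for $M_0\subset M_1$). This is standard in subfactor theory and follows from finiteness of the index $\delta^2$, but lies slightly outside the planar-algebraic framework set up so far; the cleanest route is either to cite the standard basic-construction result or to provide a diagrammatic verification of the Pimsner--Popa identity using the planar calculus for $M_0\subset M_1$.
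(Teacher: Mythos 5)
Your first half is correct and is a genuinely different route from the paper's: using Corollary 4.4 to write $ze_1=me_1$, then $M_1$-bimodularity of $E_{M_1}$ together with $E_{M_1}(e_1)=\delta^{-2}$ from Lemma 4.3 to get $my=ym$ for all $y\in M_1$, and factoriality of $M_1$ to conclude $ze_1=\lambda e_1$. (Minor slip: $w=z-\lambda$ annihilates $M_1e_1M_1$ but not $M_1$ itself; this does not affect your conclusion, which only uses $M_1e_1M_1$.) The paper never passes through $ze_1=me_1$ at all; it instead places a central element $\bar a$ into $M_0'\cap M_2=P_{0,2}$ via Theorem 3.8 and then kills it by two explicit tangle computations, first commuting with a concrete element $\Lambda\in M_1$ to force $\bar a$ into the image of $P_{0,1}$, then commuting with $e_1$ to force $\bar a\in\mathbb{C}$.

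The final step of your argument is a genuine gap, and it is exactly where you say it is. To conclude $w=0$ from $we_1=0$ you need the central support of $e_1$ in $\mathfrak{M}_2$ to be $1$, equivalently a Pimsner--Popa relation $\sum_i\lambda_ie_1\lambda_i^*=1$ holding \emph{inside} $\mathfrak{M}_2$. The standard result you propose to cite lives in the genuine basic construction $\langle M_1,e_{M_0}\rangle\subset\mathcal{B}(L^2(M_1))$ and presupposes $[M_1:M_0]<\infty$; transporting it into $\mathfrak{M}_2$ requires the identification $\mathfrak{M}_2\cong\langle M_1,e_{M_0}\rangle$, which is Lemma 4.6 of the paper, whose proof \emph{uses} the factoriality of $\mathfrak{M}_2$ (to know that the compression $\phi'$ onto $[M_1e_1]$ is injective), and the value $[M_1:M_0]=\delta^2$ is only extracted at the end of that lemma. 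So, relative to the paper's logical order, the citation is circular. To repair it you would need an independent proof of index finiteness (for instance a diagrammatic Pimsner--Popa inequality $E_{M_0}(x^*x)\geq\delta^{-2}x^*x$, or an explicit basis drawn in the planar calculus), none of which is supplied. Alternatively one can show directly that a nonzero positive central $q$ with $qe_1=0$ is impossible, but the natural way to do that is to note $q\in M_0'\cap M_2=P_{0,2}$ and compute $tr(qe_1)$ as a tangle --- which is essentially the relative-commutant argument the paper runs, so Theorem 3.8 cannot really be bypassed. As it stands, your proof is incomplete at its decisive step.
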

\begin{proof} Suppose that $\bar{a}\in \mathfrak{M}_2$ is commutative with all the elements in $\mathfrak{M}_2$, i.e., that $\bar{a}\in \mathcal{Z}(\mathfrak{M}_2)$. Anyway, since $\bar{a}\in M_0'\cap M_2$, by theorem 2.8, $\bar{a}\in P_{0,2}$.

Consider the element $\Lambda \in P_{1,2}$ described by the following tangle (all the lines imply strings):
\begin{figure}[ht]
\centering
\includegraphics[width=0.17\textwidth]{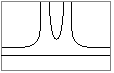}
\end{figure}
\\
In the sense of the embedding $\mathcal{I}_1^2: H_1(P)\rightarrow H_2(P)$, we have $\Lambda \in M_1 \subset \mathfrak{M}_2$.
And $\overline{P_{0,1}}\subset P_{0,2}$, the range of $P_{0,1}$ under the inclusion by the following tangle, is clearly commutative with $\Lambda $.
\begin{figure}[ht]
\centering
\includegraphics[width=0.19\textwidth]{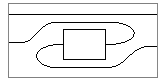}
\end{figure}
\\
Therefore consider first the case when $\bar{a}$ is orthogonal to $\overline{P_{0,1}}$.
A simple calculation shows that the orthogonality implies the following identity:
\begin{figure}[ht]
\centering
\includegraphics[width=0.23\textwidth]{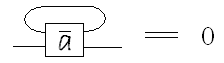}
\end{figure}
\\
In view of this identity, starting from $\langle \bar{a}\Lambda ,\bar{a}\Lambda \rangle =\langle \Lambda \bar{a},\bar{a}\Lambda \rangle$, by a diagrammatic calculation analogous to one in the proof of corollary 3.10, we get $Tr(\bar{a}^*\bar{a}=0$ from and therefore $\bar{a}=0$.
After all, we have $\bar{a}\in \overline{P_{0,1}}$. In other words, $\bar{a}$ is given by the following tangle for some $a\in P_{0,1}$:
\begin{figure}[ht]
\centering
\includegraphics[width=0.2\textwidth]{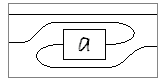}
\end{figure}
\\
Again, in view of its commutativity with $e_1$, by the same way with one in 3.10 we have $a=\delta ^{-1}Tr(a)\in \mathbb{C}$, which implies $\bar{a}\in \mathbb{C}$.
\end{proof}
\begin{lemma} $M_0 \subset M_1 \subset \mathfrak{M}_2$ is the basic construction for $M_0 \subset M_1$. In other words, there is a surjective $\ast $-isomorphism
$$\phi :\mathfrak{M}_2\rightarrow \langle M_1,e_{M_0}\rangle,\quad \phi (M_0)=M_0,\ \phi (M_1)=M_1$$
Here $\langle M_1,e_{M_0}\rangle$ is the basic construction for $M_0 \subset M_1$ and $e_{M_0}$ is the orthogonal projection $e_{M_0}:L^2(M_1)\rightarrow L^2(M_0)$.
And we have the Jones index $[M_1:M_0]=\delta ^2$.
\end{lemma}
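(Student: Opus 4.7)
The plan is to verify that $e_1 \in \mathfrak{M}_2$ plays the role of the Jones projection $e_{M_0}$ for $M_0 \subset M_1$, and then invoke the universal property of the basic construction. From Lemma 4.3 applied with $k=1$, together with Remark 4.2 identifying $\delta^{-1}\mathcal{E}_0^{1}$ with the restriction of $E_{M_0}$ to $H_1(P)$, the element $e_1 = \delta^{-1}E_1$ is a self-adjoint projection in $M_2$ satisfying
\begin{equation*}
e_1 x e_1 = E_{M_0}(x)\,e_1 \qquad (x \in M_1), \qquad tr(e_1) = \delta^{-2}.
\end{equation*}
Applying the first identity to $x \in M_0$ (so $E_{M_0}(x) = x$) and taking adjoints shows $e_1$ commutes with $M_0$, while by definition $\mathfrak{M}_2 = M_1 \vee \{e_1\}''$.

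Let $N := \langle M_1, e_{M_0}\rangle$ act on $L^2(M_1)$; this is a II$_1$ factor in which $e_{M_0}$ obeys the same Jones relation with $tr(e_{M_0}) = [M_1:M_0]^{-1}$. On the $*$-subalgebra $\mathcal{B} := M_1 + M_1 e_1 M_1 \subset \mathfrak{M}_2$ I would define
\begin{equation*}
\phi\Big(a + \sum_i a_i e_1 b_i\Big) := a + \sum_i a_i e_{M_0} b_i,
\end{equation*}
and verify well-definedness using Corollary 4.4: every element of $\mathcal{B}$ admits a unique normal form $a' + c\,e_1$ with $a', c \in M_1$, and the corresponding statement holds inside $N$. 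Multiplicativity and the $*$-property then reduce to the single relation $e x e = E_{M_0}(x) e$, which $e_1$ and $e_{M_0}$ share. Iteratively shortening words of the form $\cdots e_1 x_i e_1 \cdots$ via this relation expresses any trace of a monomial in the generators as a trace of an element of $M_1$ scaled by a power of $tr(e_1)$; since $tr(e_1) = tr(e_{M_0}) = \delta^{-2}$, the map $\phi$ preserves the canonical trace.

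Since $\phi$ is a $*$-homomorphism intertwining faithful normal tracial states on weakly dense $*$-subalgebras, it extends uniquely to a normal trace-preserving $*$-homomorphism $\phi:\mathfrak{M}_2 \to N$. Injectivity follows from factoriality of $\mathfrak{M}_2$ (Lemma 4.5), and surjectivity because the image is weakly closed and contains both generators $M_1$ and $e_{M_0}$. The clauses $\phi(M_0) = M_0$ and $\phi(M_1) = M_1$ are built in, and the index is immediate: $[M_1:M_0] = tr(e_{M_0})^{-1} = tr(e_1)^{-1} = \delta^2$. The main obstacle is the careful bookkeeping for well-definedness and trace-preservation of $\phi$; both ultimately rest on the single Jones relation from Lemma 4.3 and the normal form from Corollary 4.4, so no deeper ingredient is needed.
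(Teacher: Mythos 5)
Your overall strategy (check that $e_1$ satisfies the Jones relations and then invoke the abstract characterization of the basic construction) is reasonable in spirit, but as written it has two genuine gaps. First, the claimed normal form is false: Corollary 4.4 says that for $x\in\mathfrak{M}_2$ there is a unique $m\in M_1$ with $xe_1=me_1$; it does \emph{not} say that every element of $\mathcal{B}=M_1+M_1e_1M_1$ can be written as $a'+ce_1$. An element $\sum_i a_ie_1b_i$ cannot in general be pushed into $M_1e_1$, since $e_1b\neq E_{M_0}(b)e_1$ for general $b\in M_1$; $M_1e_1M_1$ is a two-sided ideal strictly larger than the left ideal $M_1e_1$. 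So the well-definedness of $\phi$ on $\mathcal{B}$ is not established by your argument. Second, and more seriously, your trace-preservation step is circular: you use $tr_N(e_{M_0})=\delta^{-2}$, i.e.\ $[M_1:M_0]=\delta^2$, which is precisely one of the conclusions of the lemma (you even need $[M_1:M_0]<\infty$ just to know that $N=\langle M_1,e_{M_0}\rangle$ is a II$_1$ factor carrying a finite trace). Lemma 4.3 gives $tr(e_1)=\delta^{-2}$ inside $\mathfrak{M}_2$, but says nothing a priori about $tr_N(e_{M_0})$.

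Both gaps are avoided by the route the paper actually takes, which is spatial rather than universal-property based: represent $\mathfrak{M}_2$ standardly, compress to the invariant subspace $[M_1e_1]$ (this compression is injective because $\mathfrak{M}_2$ is a factor, Lemma 4.5, and uses Corollary 4.4 only in the form $\mathfrak{M}_2e_1=M_1e_1$), and identify $[M_1e_1]$ with $L^2(M_1)$ via the unitary determined by $x\mapsto\delta xe_1$, whose isometry property is exactly the trace identity $tr(e_1x^*xe_1)=\delta^{-2}tr(x^*x)$ from Lemma 4.3. Under this identification $M_1$ acts standardly and $e_1$ acts as $e_{M_0}$, so the image of $\mathfrak{M}_2$ is $\langle M_1,e_{M_0}\rangle$ with no need for a normal form or for prior knowledge of the index; the equality $[M_1:M_0]=tr(e_1)^{-1}=\delta^2$ then comes out as a consequence rather than an input. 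If you want to keep your algebraic approach, you would have to prove $[M_1:M_0]=\delta^2$ independently first and replace the normal-form step by a genuine well-definedness argument — at which point you essentially reconstruct the compression argument anyway.
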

\begin{proof} Consider the standard form of $\mathfrak{M}_2$, i.e., its action by the left multiplication on $L^2(\mathfrak{M}_2)$.
Since $\mathfrak{M}_2e_1=M_2e_2$ by corollary 4.4, moreover, we have $[\mathfrak{M}_2e_1]=[M_2e_1]$. Here $[\ ]$ denotes the closure in $L^2(\mathfrak{M}_2)$.

It is clear that $\mathfrak{M}_2[M_1e_1]\subset [M_1e_1]$.
Therefore, if we denote the projection onto $[M_1e_1]$ by $p$, then $p\in \mathfrak{M}_2$.
Since $\mathfrak{M}_2$ is a factor by lemma 4.5,
$$\phi ':\mathfrak{M}_2\rightarrow (\mathfrak{M}_2)_p,\ \phi '(x):=x_p,$$
the induced mapping is a surjective $\ast $-isomorphism and gives on $[M_1e_1]$ a faithful $\ast $-representation of $M_0 \subset M_1 \subset \mathfrak{M}_2$.
 
Notice that the correspondence $M_1\ni x\mapsto \delta xe_1\in M_1e_1$ determines a unitary transform $U:L^2(M_1)\rightarrow [M_1e_1]$. Indeed the equality bellow, which follows from lemma 4.3, implies that the mapping is an isometry.
$$tr((xe_1)^\ast xe_1)=tr(e_1x^\ast xe_1)=tr(E_{M_0}(x^\ast x)e_1)=\delta ^{-2}tr(x^*x)$$ 

Now consider the behavior of $(M_0 \subset M_1 \subset \mathfrak{M}_2)_p$ under the spatial isomorphism by $U$,
$$\phi '':\mathcal{B}\big([M_1e_1]\big)\rightarrow \mathcal{B}\big(L^2(M_1)\big),\ \phi ''(x):=U^*xU.$$
Notice that the action of $x_p\in (\mathfrak{M}_2)_p$ on $[M_1e_1]$ is given by $x_p(ye_)=xye_1,\ y\in M_1$.
Then first of all, since for $x\in M_1$, 
$$x_pU(y)=x_p(\delta ye_1)=\delta xye_1=U(xy),\ y\in M_1$$
we have $(\phi ''(x_p))y=xy$.
Therefore, under the identification of the elements of $M_1$ with the corresponding left multiplications on $L^2(M_1)$(i.e. the standard representation), we have $\phi ''\left((M_1)_p\right)=M_1$ and hence $(\phi ''\circ \phi ')\arrowvert _{M_1}=id_{M_1}$. On the other hand, for $e_1$, from
$${e_1}_p(U(y))={e_1}_p(\delta ye_1)=\delta e_1ye_1=\delta E_{M_0}(y)e_1=U(E_{M_0}(y)),\ y\in M_1$$   
it follows that $\phi ''({e_1}_p)\arrowvert _{M_1}=E_{M_0}$, and furthermore we get $(\phi ''\circ \phi ')(e_1)=e_{M_0}$.
After all it is proved that the mapping $\phi =\phi ''\circ \phi '$ is the desired $\ast $-isomorphism. Therefore $M_0 \subset M_1 \subset \mathfrak{M}_2$ is the basic construction and $e_1$ is its Jones projection.
Now it is also clear that we have $[M_1:M_0]={tr(e_1)}^{-1}=\delta ^2$.               
\end{proof}
\begin{corollary} We have $M_2=\mathfrak{M}_2$. Therefore $M_0 \subset M_1 \subset M_2$ is a basic construction.
\end{corollary}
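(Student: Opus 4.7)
The plan is to show $M_2=\mathfrak{M}_2$ via a Jones-index comparison. Both $\mathfrak{M}_2$ and $M_2$ are $II_1$ factors with $M_1\subset \mathfrak{M}_2\subset M_2$, and Lemma~4.6 gives $[\mathfrak{M}_2:M_1]=\delta^2$. Provided the inclusion $M_1\subset M_2$ has finite index, multiplicativity of the Jones index yields
\[
[M_2:M_1]=[M_2:\mathfrak{M}_2]\cdot [\mathfrak{M}_2:M_1]=[M_2:\mathfrak{M}_2]\cdot \delta^2,
\]
so exhibiting the bound $[M_2:M_1]\le \delta^2$ forces $[M_2:\mathfrak{M}_2]=1$ and hence $M_2=\mathfrak{M}_2$ (the reverse inequality $[M_2:M_1]\ge [\mathfrak{M}_2:M_1]=\delta^2$ is automatic from the intermediate inclusion).

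To obtain the upper bound, and simultaneously the finiteness, I would construct a Pimsner--Popa basis of $M_2$ over $M_1$ whose trace sum equals $\delta^2$. The natural candidates come from the planar algebra itself: for every $n\ge 0$, the standard inclusion $P_{n,1}=P_{2n+1}\hookrightarrow P_{2n+2}=P_{n,2}$ of Definition~2.6 is Markov with modulus $\delta^2$, and so admits a Pimsner--Popa basis at that level. The essential step is to choose these levelwise bases consistently with the embeddings $\mathcal{I}_1^2$, so that they assemble into a single finite family $\{b_\alpha\}\subset M_2$ satisfying both the orthogonality relation $E_{M_1}(b_\alpha^*b_\beta)=\delta_{\alpha\beta}p_\alpha$ for projections $p_\alpha\in M_1$ and the Parseval-type completeness relation $\sum_\alpha b_\alpha E_{M_1}(b_\alpha^*x)=x$ for every $x\in M_2$. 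The identity $\sum_\alpha \mathrm{tr}(b_\alpha b_\alpha^*)=\delta^2$ then yields the required bound.

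The main obstacle is the coherence of such a basis across all summands of $H_2(P)=\bigoplus_n P_{n,2}$: while each finite-level inclusion $P_{n,1}\subset P_{n,2}$ is manifestly Markov, verifying the Parseval relation uniformly on every level requires a tangle-theoretic computation showing that the graded $\circ$-product of Definition~2.3 and the conditional-expectation tangle $\mathcal{E}_1^2$ interact coherently with the candidate basis elements. A more elementary alternative would be to prove $P_{n,2}\subset \mathfrak{M}_2$ for every $n$ directly by induction, using the identity $e_1ye_1=E_{M_0}(y)e_1$ from Lemma~4.3 together with the $\circ$-product of Definition~2.3 to express $P_{n,2}$-elements through $M_1$ and $e_1$; the difficulty there is that in a general subfactor planar algebra $P_{n+1}$ is not simply generated by $P_n$ and a Jones projection, so the induction must exploit the full graded product across all levels rather than the intra-planar-algebra basic construction.
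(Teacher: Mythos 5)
Your overall skeleton is exactly the paper's: both arguments run the index comparison $[M_2:M_1]=[M_2:\mathfrak{M}_2]\cdot[\mathfrak{M}_2:M_1]$ against $[\mathfrak{M}_2:M_1]=\delta^2$ from Lemma~4.6 and conclude $[M_2:\mathfrak{M}_2]=1$, hence $M_2=\mathfrak{M}_2$. The difference is entirely in how the input $[M_2:M_1]=\delta^2$ is obtained, and that is where your proposal has a genuine gap: the Pimsner--Popa construction is not carried out, and the part you yourself flag as ``the main obstacle'' is precisely the part that does the work. A Pimsner--Popa basis for $M_1\subset M_2$ must satisfy the completeness relation $\sum_\alpha b_\alpha E_{M_1}(b_\alpha^* x)=x$ inside the von Neumann algebras, where the relevant product is the graded $\circ$-product of Definition~2.3; that product does not preserve the grading (the product of elements of $P_{m,2}$ and $P_{l,2}$ spreads over many summands), so levelwise bases for the finite-dimensional inclusions $P_{n,1}\subset P_{n,2}$ do not assemble into a basis for $M_1\subset M_2$ in any evident way, and there is no reason the individual finite-dimensional inclusions carry the Markov data you would need. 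As written, the bound $[M_2:M_1]\le\delta^2$ is asserted, not proved.

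The paper closes this gap much more cheaply: all of the machinery of Lemma~4.3 (the relations $tr(xe_k)=\delta^{-2}tr(x)$ and $e_kxe_k=E_{M_{k-1}}(x)e_k$) was established for every $k$, so the entire argument of Lemma~4.6 applies verbatim one level up, to $M_1\subset M_2\subset M_3$ with the Jones projection $e_2$. This exhibits the von Neumann algebra generated by $M_2$ and $e_2$ as the basic construction of $M_1\subset M_2$, whence $[M_2:M_1]=tr(e_2)^{-1}=\delta^2$ with no new computation. If you want to salvage your write-up, replace the Pimsner--Popa paragraph by this one observation; the rest of your index bookkeeping then goes through as stated.
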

\begin{proof} It is easy to see that, even for $\subset M_1 \subset M_2 \subset M_3$, we can repeat the argument like one in the previous lemma. Therefore we get $[M_2:M_1]=\delta ^2$.
Since $[\mathfrak{M}_2:M_1]=[M_1:M_0]=\delta ^2$ by the previous lemma, we see that $[\mathfrak{M}_2:M_2]=1$ and therefore $\mathfrak{M}_2=M_2$.
\end{proof}
\begin{theorem} $M_0 \subset M_1 \subset M_2 \subset \cdots$ is a tower of basic construction and $e_k,\ k=1,2,\cdots$, are its Jones projections.

Moreover we have ${M_0}'\cap M_k=P_k$. Besides, for every $l$ the restriction of the conditional expectation $E_{M_k}:M_{k+l}\rightarrow M_k$ to ${M_0}'\cap M_{k+l}$
$$E_{{M_0}'\cap M_k}:P_{k+l}\rightarrow P_k$$
is equal to the corresponding conditional expectation tangle (more exactly, the mapping described by it) multiplied by $\delta ^{-1}$.
\end{theorem}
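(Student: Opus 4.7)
The plan is to bootstrap Corollary 4.7 up the whole tower, then read off the relative commutant and the expectation formula from Theorem 3.8 and Remark 4.2 respectively.

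For the tower of basic constructions, the key observation is that the chain of arguments Lemma 4.5 $\Rightarrow$ Lemma 4.6 $\Rightarrow$ Corollary 4.7 is translation-invariant in $k$. Concretely, that chain used only three ingredients: factoriality of the middle algebra (Corollary 3.10 applies uniformly to each $M_{k+1}$), the Jones-type relations $E_{M_k}(e_{k+1})=\delta^{-2}$ and $e_{k+1}\,x\,e_{k+1}=E_{M_k}(x)\,e_{k+1}$ for $x\in M_{k+1}$ (Lemma 4.3 is stated for all $k\geq 1$), and the isometry $M_{k+1}\ni y\mapsto \delta y e_{k+1}$ whose defining identity $tr((ye_{k+1})^{*}ye_{k+1})=\delta^{-2}tr(y^{*}y)$ again comes from Lemma 4.3. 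I would therefore replay the proofs of Lemma 4.6 and Corollary 4.7 verbatim with $(M_0,M_1,M_2,e_1)$ replaced by $(M_k,M_{k+1},M_{k+2},e_{k+1})$, for each $k\geq 0$, obtaining in turn that $M_k\subset M_{k+1}\subset M_{k+2}$ is a basic construction with Jones projection $e_{k+1}$ and $[M_{k+1}:M_k]=\delta^{2}$. Induction on $k$ then yields the entire tower with $e_k$ as the sequence of Jones projections.

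The equality $M_0'\cap M_k=P_k$ is immediate from Theorem 3.8 combined with the identification $P_{0,k}=P_k$ fixed in Definition 2.1. For the conditional-expectation statement, Remark 4.2 already identifies $\delta^{-l}\mathcal{E}_k^{k+l}$ with the restriction of $E_{M_k}:M_{k+l}\to M_k$ to $H_{k+l}(P)$. The mapping $\mathcal{E}_k^{k+l}$ was defined in Definition 4.1 componentwise by the conditional-expectation tangles $P_{2n+k+l}\to P_{2n+k}$, so restricting still further to the $n=0$ summand $P_{0,k+l}=P_{k+l}$, which by the above coincides with $M_0'\cap M_{k+l}$, leaves precisely the conditional-expectation tangle from $P_{k+l}$ to $P_k$ times the normalizing power of $\delta$ asserted in the statement.

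The main obstacle I anticipate is verifying that the Lemma 4.6 argument really does transport cleanly to each higher level: the spatial isomorphism there was implemented by the unitary $U(y)=\delta y e_1$ from $L^{2}(M_1)$ onto $[M_1 e_1]$, and one must check that the analogous $U_k(y)=\delta y e_{k+1}:L^{2}(M_{k+1})\to [M_{k+1}e_{k+1}]$ is still a unitary and still intertwines the left actions of $M_{k+1}$ and sends $e_{k+1}$ to $e_{M_k}$. This should be mechanical once the Lemma 4.3 identities are available at the appropriate level, but some care is needed because the original proof of Lemma 4.6 referenced the specific role of $M_0$ inside $M_1$; in the induction one must use the lower steps of the tower, already established, to play the role of that $M_0$.
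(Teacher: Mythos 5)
Your proposal is correct and follows essentially the same route as the paper, whose entire proof of this theorem is the one-line remark that it amounts to repeating Theorem 3.8 and the arguments of 4.1--4.7 inductively; you have simply spelled out what that induction consists of (transporting the chain Lemma 4.5 $\Rightarrow$ Lemma 4.6 $\Rightarrow$ Corollary 4.7 to $(M_k, M_{k+1}, M_{k+2}, e_{k+1})$ via the uniformly valid Lemma 4.3 and Corollary 3.10, then reading the relative commutant off Theorem 3.8 and the expectation formula off Remark 4.2 and Definition 4.1). The one point you flag as a potential obstacle --- rechecking the unitary $y\mapsto \delta y e_{k+1}$ and the intertwining at higher levels --- is indeed the content the paper leaves implicit, and your treatment of it is consistent with what the paper intends.
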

\begin{proof} This is nothing but repeating the theorem 3.8 and 4.1-4.8 inductively.
\end{proof}
\begin{corollary} For every subfactor planar algebra, the standard invariant of the subfactor $M_0 \subset M_1$ above constructed, i.e. its planar algebra is identical with $P$.
\end{corollary}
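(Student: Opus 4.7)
The plan is to reduce the statement to a collection of identifications already established in Theorem~4.8 and Lemma~4.6, and then invoke the standard fact that a subfactor planar algebra is determined by its underlying tower of relative commutants together with the Jones projections, the trace, and the conditional expectations down the tower. First, I would recall that the standard invariant of a finite-index extremal $II_1$ subfactor $M_0\subset M_1$, with Jones tower $M_0\subset M_1\subset M_2\subset\cdots$, is the subfactor planar algebra $\widetilde{P}$ whose $k$-box space is $\widetilde{P}_k:=M_0'\cap M_k$, whose Jones projection tangles act as the actual Jones projections, and whose conditional expectation tangles act as the trace-preserving conditional expectations $E_{M_k}|_{M_0'\cap M_{k+l}}$ suitably normalized. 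These data, together with the multiplication, involution, and inclusion tangles, generate the full planar operad action, so proving $\widetilde{P}=P$ amounts to matching only these generating ingredients.

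Second, I would assemble the matchings already obtained. Theorem~4.8 gives a canonical $\ast$-algebra identification $M_0'\cap M_k=P_k$ for every $k$, which automatically identifies the multiplication, involution, and unit tangles on the two sides. The same theorem identifies the Jones projections of the tower with $\delta^{-1}E_k$, i.e.\ the normalized Jones projection tangles of $P$, and identifies the conditional expectations $E_{M_k}:M_{k+l}\to M_k$ restricted to relative commutants with $\delta^{-l}$ times the conditional expectation tangles of $P$. Lemma~4.6 gives $[M_1:M_0]=\delta^2$, so that the modulus $\delta$ of $P$ agrees with the modulus of the planar algebra of the standard invariant; this is what guarantees the $\delta$-normalizations are the correct ones to turn $\widetilde{P}$ into an actual subfactor planar algebra.

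Third, I would use the fact that the Markov trace on $M_0'\cap M_k$ is recovered by iterating the conditional expectations down to $M_0'\cap M_0=\mathbb{C}$, and then applying the normalized trace. Since the conditional expectations already match (by Theorem~4.8) and the scalar is fixed by the normalization of the inner product $\langle\,,\,\rangle_k=\delta^{-k}Q_k(\,,\,)$ introduced just before Lemma~2.7, the trace tangle of $P$ also acts on $\widetilde{P}_k$ as the canonical trace $tr$ of Remark~2.9. Having matched multiplication, involution, inclusion, Jones projection, conditional expectation, and trace tangles, every planar tangle (which can be written as a composition of these) acts identically on the two sides, and therefore $\widetilde{P}=P$ as subfactor planar algebras.

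The main obstacle, and essentially the only non-routine step, is the bookkeeping of the powers of $\delta$: the inner product was rescaled by $\delta^{-k}$, the Jones projection is $\delta^{-1}E_k$, and each conditional expectation carries a factor $\delta^{-1}$, so one has to check that these normalizations conspire exactly to produce the tautological identification $\widetilde{P}=P$ rather than a rescaled planar algebra. Once this is verified, the conclusion is immediate from the generation result for the planar operad by inclusion, Jones projection, conditional expectation, multiplication, and involution tangles.
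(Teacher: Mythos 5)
Your overall strategy --- reduce the corollary to matching a generating set of tangles against data already established in Theorem~4.8 and Lemma~4.6 --- is the same as the paper's, but you have omitted the one ingredient that the paper's proof actually has to supply \emph{beyond} Theorem~4.8. The standard invariant is the lattice of relative commutants $\{M_i'\cap M_j\}$, and the basic ingredients that determine its planar algebra structure (in the sense of Jones and Kodiyalam--Sunder, the references the paper cites at this point) include not only the first row $M_0'\cap M_k$ with its ``right'' conditional expectations $E_{M_k}$, but also the second row $M_1'\cap M_{k+l}$ together with the \emph{left} conditional expectation $E_{M_1'\cap M_k}$ (equivalently, the tangle capping off the leftmost strings, or the rotation). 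Your list of matched tangles --- multiplication, involution, inclusion, Jones projections, right conditional expectations, trace --- does not generate the full planar operad: the left capping cannot be obtained from the others, so the claim in your third paragraph that ``every planar tangle can be written as a composition of these'' is unjustified as stated. It is precisely this missing ingredient that the paper's proof verifies: it identifies $M_1'\cap M_{k+l}$ with the range of an explicit inclusion tangle (by an argument analogous to Lemma~4.5) and checks that $\delta\,E_{M_1'\cap M_k}$ acts on $M_0'\cap M_k=P_k$ as the ordinary left conditional expectation tangle of $P$.

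The rest of your bookkeeping is fine and consistent with the paper: the recovery of the Markov trace by iterating the (right) conditional expectations, the matching of the modulus via $[M_1:M_0]=\delta^2$ from Lemma~4.6, and the attention to the powers of $\delta$ coming from the normalization $\langle\,,\,\rangle_k=\delta^{-k}Q_k(\,,\,)$, the projections $e_k=\delta^{-1}E_k$, and the expectations $\delta^{-l}\mathcal{E}_k^{k+l}$. To close the gap you would need to add two points: (i) a relative-commutant computation, in the style of Lemma~4.5 or Theorem~3.8, identifying $M_1'\cap M_{k+l}$ inside $M_0'\cap M_{k+l}=P_{k+l}$; and (ii) the verification that the normalized left conditional expectation is implemented by the left conditional expectation tangle of $P$. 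With those added, your argument coincides with the paper's.
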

\begin{proof} By an argument analogous to lemma 4.5, it is not difficult to see that ${M_1}'\cap M_{k+l}$ is identical with the range of $M_k$ by the following tangle:
\begin{figure}[ht]
\centering
\includegraphics[width=0.22\textwidth]{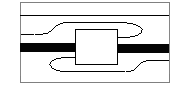}
\end{figure}
\\
It is also clear that on ${M_0}'\cap M_k$, $\delta E_{{M_1}'\cap M_k}$ is represented by the ordinary left conditional expectation tangle of $P$, which can be seen in our convention as follows:
\begin{figure}[ht]
\centering
\includegraphics[width=0.22\textwidth]{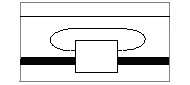}
\end{figure}
\\
From these facts combined with theorem 3.9, we see that all the basic ingredients ([5] or, for more detailed, [7]) which determine a subfactor planar algebra over the lattice of relative commutants accompanied with, are completely identical with those of $P$.
\end{proof}

\section{Comments}
Our approach provides further generalization.
Let us start with any given subfator planar algebra $P=(P_{n})_{n=0_{\pm},1,2,\cdots}$.
For every $r=1,2,3,\cdots$, let us introduce the following sequence of direct sums (of finite dimensional vector spaces):
$$H_k^{(r)}(P)=\bigoplus _{n=0}^\infty P_{n,k}^{(r)},\ k=0,1,2,\cdots$$
Here by $P_{n,k}^{(r)}$ we denote $P_{2nr+k}$.

The discussion in the previous sections is corresponding to case of $r=1$. Furthermore it seems routine to verify the analogous statements for the situation here. Replacing Fig.1, the manner of describing elements in $P_{n,k}$ by the figure bellow, makes every detail clear.
Only thing we have to pay attention is to be careful with the meaning of the thick lines running upward in Fig.2 and therefore with the factors which might appear in connection with the modulus $\delta $.
\begin{figure}[ht]
\centering
\includegraphics[width=0.85\textwidth]{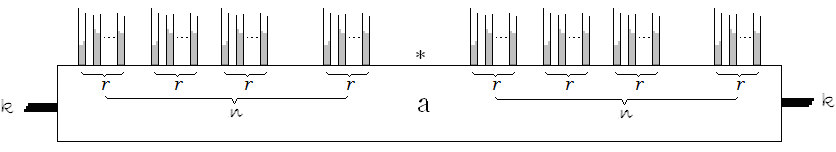}
\end{figure}

For every $r=1,2,3,\cdots$,
$$H_0^{(r)}(P)\subset H_1^{(r)}(P)\subset H_2^{(r)}(P)\subset \cdots \subset H_k^{(r)}(P)\subset \cdots $$
is a sequence of Hilbert algebras.
Moreover from these sequences we have the following a family of countably many towers of subfactors:
$$M_0^{(r)}\subset M_1^{(r)}\subset M_2^{(r)}\subset \cdots ,\ r=1,2,3,\cdots $$
It would be also routine to verify that all subfactors in every tower have the same standard invariant, i.e. the lattices of their relative commutants generate the given subfactor planar algebra.

At present it is uncertain for different $r=1,2,3,\cdots $, whether there is an equivalence between the corresponding towers, in a reasonable meaning, or not.

In [2] the isomorphism class of the factors constructed in [1] was identified in case of finite-depth(also [9], [10]). The same kind of identification problem for our construction is remained open as well.

\end{document}